\newtheorem{thm}{Theorem}
\newtheorem{defn}{Definition}
\newtheorem*{note}{Note}
\newtheorem{lemma}{Lemma}
\newtheorem{cor}{Corollary}
\newtheorem{prop}{Proposition}
\newcommand{\Z}{\mathbb{Z}}
\newcommand{\D}{\mathbb{D}}
\newcommand{\R}{\mathbb{R}}
\newcommand{\Q}{\mathbb{Q}}
\newcommand{\norm}{\vert\vert}
\begin{document}
\title[Limit-Periodic Verblunsky Coefficients]{Limit-Periodic Verblunsky Coefficients for Orthogonal Polynomials on the Unit Circle}

\author[D.\ Ong]{Darren C. Ong}

\address{Department of Mathematics, Rice University, Houston, TX~77005, USA}

\email{darren.ong@rice.edu}

\urladdr{http://math.rice.edu/$\sim$do3}

\thanks{Supported in part by NSF grant
DMS-1067988.}
\maketitle
\begin{abstract}
Avila recently introduced a new method for the study of the discrete
Schr\"odinger Operator with limit periodic potential. I adapt this method to
the context of orthogonal polynomials in the unit circle with limit periodic
Verblunsky Coefficients. Specifically, I represent these two-sided Verblunsky
Coefficients as a continuous sampling of the orbits of a Cantor group by a
minimal translation. I then investigate the measures that arise on the unit
circle as I vary the sampling function. I show that generically the spectrum is a Cantor set and we have empty point spectrum. Furthermore, there exists a dense set of sampling functions for which the corresponding spectrum is a Cantor set of positive Lebesgue measure, and all corresponding spectral measures are purely absolutely continuous.
\end{abstract}
{\textbf{Keywords}} Spectral Theory, Orthogonal Polynomials on the Unit Circle, Limit-periodicity.

\begin{section}{Introduction}
{}
There is a strong relationship between orthogonal polynomials corresponding to probability measures on the unit circle (OPUC), and orthogonal polynomials corresponding to probability measures on the real line (OPRL). This connection is very carefully explored in \cite{Simon}. In particular, we are often able to adapt results concerning the spectral measure of the discrete Schr\"odinger Operator to the context of probability measures on the unit circle.

{}
In this paper, we adapt the results of \cite{Damanik-Gan} to the unit circle context in this way. In other words, we examine the properties of the probability measure on the unit circle when the recurrence coefficients of the associated orthogonal polynomials (the \em Verblunsky coefficients\em) are a limit-periodic sequence. I wish here to thank Damanik and Gan, the authors of \cite{Damanik-Gan} for many helpful suggestions. I wish also to thank the anonymous referee.

\begin{defn}
A \em limit-periodic \em sequence in $\ell^\infty(\Z)$ is a sequence that is expressed as a uniform limit of periodic sequences in $\ell^\infty(\Z)$.
\end{defn}
{}
The main thrust of our argument relies on an adaptation of a method introduced in \cite{Avila} and utilized heavily in \cite{Damanik-Gan}, whereby limit periodic potential sequences of the discrete Schr\"odinger Operator are expressed as a continuous sampling of a minimal translation on a Cantor group.

\begin{defn}
A Cantor group $\Omega$ is a totally disconnected compact Abelian topological group with no isolated points. We say that a map $T:\Omega\to\Omega$ is a translation if it is of the form $T\omega=\omega+a$ for some $a\in\Omega$. A translation $T$ is called minimal if the $T$-orbit of every $\omega\in\Omega$ is dense in $\Omega$.
\end{defn}
{}
Our setup is as follows: we fix $\Omega$ and $T$ throughout, where $\Omega$ is a Cantor group, and $T:\Omega\to \Omega$ a minimal translation. Then, a sampling function $f\in C(\Omega, \D)$ induces a sequence of Verblunsky coefficients $\alpha_j=f(T^j\omega)$, for some $\omega\in \Omega$. We shall denote the spectrum of $\mathcal E_f$, the induced extended CMV operator (The definition of the extended CMV matrix can be found in (\cite{Simon}, 10.5.34 and 10.5.35)) as $\Sigma(\mathcal E_f)$. By minimality, this spectrum is independent of $\omega$ (by Theorem 10.16.1 of \cite{Simon}). The extended CMV matrix is a $5$-diagonal unitary matrix composed of $2\times 4$ blocks, written as

\begin{equation}\label{CMVmatrix}
\mathcal E=\left(
\begin{array}{ccccccc}
\ldots&\ldots& \ldots&\ldots&\ldots&\ldots&\ldots\\
\ldots&-\bar\alpha_0\alpha_{-1}&\bar \alpha_1\rho_0&\rho_1\rho_0&0&0&\ldots\\
\ldots&-\rho_0\alpha_{-1}&- \bar\alpha_1\alpha_0&-\rho_1\alpha_0&0&0&\ldots\\
\ldots&0& \bar\alpha_2\rho_1&-\bar\alpha_2\alpha_1&\bar\alpha_3\rho_2&\rho_3\rho_2&\ldots\\
\ldots&0& \rho_2\rho_1&-\rho_2\alpha_1&-\bar\alpha_3\alpha_2&-\rho_3\alpha_2&\ldots\\
\ldots&0& 0&0&\bar\alpha_4\rho_3&-\bar\alpha_4\alpha_3&\ldots\\
\ldots&\ldots& \ldots&\ldots&\ldots&\ldots&\ldots\\
\end{array}
\right),
\end{equation}
{}
where $\rho_n=\sqrt{1-\vert\alpha_n\vert^2}$. We now are able to state our three main theorems:

\begin{thm}\label{t.gordon}
There exists a dense $G_{\delta}$ set $\mathcal G\subseteq C(\Omega,\D)$ such that for every $f\in\mathcal G$ and $\omega\in \Omega$, the extended CMV operator, $\mathcal E_f$ has empty point spectrum. 
\end{thm}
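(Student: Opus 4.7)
The plan is to prove a Gordon-type theorem for the extended CMV operator, adapting the strategy used in \cite{Avila} and \cite{Damanik-Gan} for the discrete Schr\"odinger case. The argument has three ingredients: identifying a sufficient ``Gordon condition'' on the sampling function $f$, showing the set of such $f$ is a dense $G_\delta$, and proving a Gordon-type lemma for OPUC that rules out $\ell^2$ eigenfunctions under that condition.

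First I would call a sampling function $f \in C(\Omega, \D)$ \emph{Gordon-good} if there is a sequence of $T$-periodic sampling functions $f_n$, with periods $p_n \to \infty$, such that $\|f - f_n\|_\infty \leq C p_n^{-p_n}$ (or any sufficiently rapidly decaying bound). Here a $T$-periodic sampling function is one that factors through a finite quotient of $\Omega$, so that $\alpha_j = f(T^j \omega)$ is a genuinely periodic sequence for every $\omega$. Such periodic sampling functions are dense in $C(\Omega, \D)$, because the minimality of $T$ on the Cantor group $\Omega$ provides arbitrarily fine clopen partitions of $\Omega$ into $T$-towers, and we can approximate any continuous function uniformly on these partitions.

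Next I would express
\[
\mathcal G = \bigcap_{N \in \N} \bigcup_{n \in \N} \mathcal G_{N,n},
\]
where $\mathcal G_{N,n}$ is the set of $f$ admitting a $T$-periodic approximation $f_n$ of period at least $n$ satisfying $\|f - f_n\|_\infty < 2^{-N p_n}$. Each $\mathcal G_{N,n}$ is open (small perturbations of $f$ preserve the strict inequality with the same $f_n$), so $\mathcal G$ is a $G_\delta$. Density follows by an inductive construction: starting from any $f \in C(\Omega, \D)$ and any target $\varepsilon > 0$, pick a periodic approximation $g_0$ with $\|f - g_0\|_\infty < \varepsilon/2$, then perturb $g_0$ on successively finer clopen pieces by periodic sampling functions of rapidly increasing periods, choosing the perturbation sizes to sum to less than $\varepsilon/2$ while ensuring the Gordon condition at each scale.

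The main work, and the step I expect to be the principal obstacle, is a Gordon lemma for the extended CMV matrix $\mathcal E_f$. Fix $f \in \mathcal G$ and suppose $z \in \partial \D$ is an eigenvalue with eigenvector $\psi \in \ell^2(\Z)$. Using the Szeg\H{o} transfer matrices associated with $\mathcal E_f$ and $\mathcal E_{f_n}$, the fact that $\|f - f_n\|_\infty$ is super-exponentially small in $p_n$ makes the two transfer matrices over a window of length $3p_n$ centered at the origin agree to within a factor that stays close to the identity. For the periodic operator $\mathcal E_{f_n}$, the monodromy $M_n(z)$ lies in $\mathrm{SU}(1,1)$, and the Cayley--Hamilton identity gives a three-term relation among $M_n(z)^{-1}, I, M_n(z), M_n(z)^2$ implying a uniform lower bound of the form
\[
\max\bigl\{\|\psi_{p_n}\|, \|\psi_{2p_n}\|, \|\psi_{-p_n}\|\bigr\} \geq \tfrac{1}{2}\|\psi_0\|,
\]
where $\psi_k = (\psi(k), \psi(k+1))^T$. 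Transferring this back to $\mathcal E_f$ using the matrix comparison, the same inequality (up to a factor tending to $1$) persists for all large $n$, contradicting $\psi \in \ell^2$. The delicate points are to set up the correct analog of the Cayley--Hamilton/Gordon three-term identity in the CMV transfer-matrix framework (where one must work with $2\times 2$ Szeg\H{o} matrices and keep track of the alternating CMV structure) and to bound the propagation of the perturbation $f - f_n$ through $3p_n$ transfer steps uniformly in $z \in \partial \D$. With the lemma in hand, a standard Baire-category invocation on $\mathcal G$ finishes the proof.
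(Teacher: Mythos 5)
Your Baire-category scaffolding (periodic sampling functions factoring through finite quotients, quantitative approximation sets that are open, intersection over scales) is essentially the same as the paper's, and the overall Gordon strategy is the right one. The genuine gap is that the step you yourself flag as ``the principal obstacle'' --- the Gordon lemma for the extended CMV operator --- is exactly where all the work lies, and your sketch of it is not correct as stated. The eigenvalue equation $\mathcal E u = zu$ for the five-diagonal extended CMV matrix is not the Szeg\H{o} recursion, so ``the Szeg\H{o} transfer matrices associated with $\mathcal E_f$'' do not directly propagate the components of an $\ell^2$ eigenvector; one must first derive a genuine $2\times 2$ transfer-matrix reformulation from the $2\times 4$ block structure. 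The paper does this explicitly, obtaining two-step matrices $A_n$ (acting on $(u_n,u_{n+1})^T$ for odd $n$) with $\det A_n=\rho_n/\rho_{n+2}$, which is not $1$. Your appeal to ``the monodromy $M_n(z)$ lies in $\mathrm{SU}(1,1)$'' is also inaccurate as written: the Szeg\H{o} one-step matrices have determinant $z$, and $\mathrm{SU}(1,1)$ membership holds only after a $z^{-1/2}$ normalization; what the Cayley--Hamilton/three-block argument really needs is a monodromy with unimodular determinant, and establishing that in the CMV framework is precisely the kind of bookkeeping you have deferred. The paper sidesteps the whole issue by using the \emph{four}-block inequality $\max(\Vert Ax\Vert,\Vert A^2x\Vert,\Vert A^{-1}x\Vert,\Vert A^{-2}x\Vert)\ge 1/2$, valid for \emph{any} invertible $2\times 2$ matrix, applied over a window of length $\sim 4q_k$; its Note also shows how one could renormalize to determinant-one matrices $\mathfrak A_n$ (and over a full even period the determinants of the $A_n$ telescope to $1$), so your three-block route is salvageable --- but only after the transfer-matrix derivation that your proposal omits.

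Two smaller points. First, your definition $\mathcal G=\bigcap_N\bigcup_n\mathcal G_{N,n}$ does not force the approximating periods to tend to infinity for a fixed $f\in\mathcal G$; if they stay bounded, $f$ is itself periodic and you need a separate (Floquet) remark that periodic extended CMV operators have no eigenvalues, or you should couple the period to the scale as the paper does (it takes $q_k=j_k k\to\infty$). Second, the admissible approximation error must be allowed to depend on how close $\Vert f\Vert_\infty$ is to $1$, since the transfer-matrix entries contain $1/\rho$ factors that blow up as $|\alpha|\to 1$; the paper encodes this through the function $\gamma(k,q,r)$ and the radii $R(f_j)$, whereas your fixed rate $Cp_n^{-p_n}$ works only after observing that for a fixed $f$ the relevant constants are uniform because $\sup_\Omega|f|<1$ by compactness --- a point worth making explicit, since it is what keeps the sets $\mathcal G_{N,n}$ both open and useful.
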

\begin{thm}\label{t.cantor}
There exists a dense $G_\delta$ set $\mathcal R\subseteq C(\Omega,\D)$ such that for every $f\in\mathcal R$ and $\omega\in \Omega$, the spectrum of the extended CMV operator, $\mathcal E_f$ is a Cantor set.
\end{thm}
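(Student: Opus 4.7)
\emph{Plan.} The spectrum $\Sigma(\mathcal E_f)$ is always closed in $\partial\D$, so to conclude that it is a Cantor set one needs to establish \emph{empty interior} and \emph{no isolated points}. My plan is to produce $\mathcal R$ as the intersection of the set $\mathcal G$ from Theorem~\ref{t.gordon} with countably many open dense sets, and then invoke the Baire category theorem.

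For each integer $m\geq 1$, let
$$\mathcal R_m = \{\, f \in C(\Omega,\D) : \Sigma(\mathcal E_f)\ \text{contains no arc of length}\geq 1/m \,\}.$$
I will show each $\mathcal R_m$ is open and dense in $C(\Omega,\D)$. Openness reduces to closedness of the complement: uniform convergence $f_n\to f$ forces uniform convergence of the Verblunsky coefficients $\alpha_j^{(n)}=f_n(T^j\omega)$ in $j$, which in turn forces $\mathcal E_{f_n}\to\mathcal E_f$ in operator norm and hence Hausdorff convergence of the spectra. If each $\Sigma(\mathcal E_{f_n})$ contains an arc $I_n$ of length $1/m$, pass to a subsequence with $I_n\to I$; for any $z\in I$ pick $z_n\in I_n$ with $z_n\to z$, and the non-invertibility of $\mathcal E_{f_n}-z_n$ together with norm convergence forces $z\in\Sigma(\mathcal E_f)$. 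Hence $I\subseteq\Sigma(\mathcal E_f)$, and the complement of $\mathcal R_m$ is closed.

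The main step is density of $\mathcal R_m$. Given $f\in C(\Omega,\D)$ and $\epsilon>0$, I first approximate $f$ to within $\epsilon/2$ by a periodic sampling function $f_0$, that is, one factoring through a finite quotient $\Omega/\Omega'$ by a clopen subgroup $\Omega'\subseteq\Omega$; such $f_0$ are dense in $C(\Omega,\D)$ because $\Omega$ is an inverse limit of finite groups. The sequence $\alpha_j=f_0(T^j\omega)$ is then periodic of period $p=[\Omega:\Omega']$, and the spectrum of the periodic extended CMV operator $\mathcal E_{f_0}$ is a union of at most $p$ closed arcs, controlled by the discriminant $\Delta(z)=\mathrm{tr}\,T_p(z)$ of the period transfer matrix (cf.~\cite{Simon}). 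I then invoke the OPUC analogue of Avila's subdivision lemma from \cite{Avila}: for any such $f_0$ and any $m$, I claim one can find a smaller clopen subgroup $\Omega''\subseteq\Omega'$ and a periodic sampling function $g$ factoring through $\Omega/\Omega''$ with $\|g-f_0\|_\infty<\epsilon/2$, such that $\Sigma(\mathcal E_g)$ consists of arcs each shorter than $1/m$. The idea is that passing to a larger period refines the band structure, and a generic perturbation in the finite-dimensional parameter space of period-$p'$ Verblunsky coefficients both opens all gaps and shrinks all bands.

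Setting $\mathcal R=\mathcal G\cap\bigcap_{m\geq 1}\mathcal R_m$, Baire yields a dense $G_\delta$ set. For $f\in\mathcal R$, $\Sigma(\mathcal E_f)$ has empty interior (from $\bigcap_m\mathcal R_m$) and empty point spectrum (from $\mathcal G$); since $\mathcal E_f$ is unitary, any isolated point of its spectrum would be an eigenvalue, so there are no isolated points either. Thus $\Sigma(\mathcal E_f)$ is closed, perfect, and of empty interior, hence a Cantor set. The main obstacle will be the subdivision step: while the periodic discriminant formalism for OPUC is available in \cite{Simon}, extracting the quantitative statement that generic period-$p'$ perturbations produce arcs all of length $<1/m$ requires careful spectral analysis of the period transfer matrix in the CMV setting, adapting the Schr\"odinger argument of \cite{Damanik-Gan}.
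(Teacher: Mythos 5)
Your proposal reaches the right conclusion but by a genuinely different route from the paper, and the difference is worth spelling out. The paper shows that $\mathcal R=\{f:\Sigma(\mathcal E_f)\text{ has empty interior}\}$ is $G_\delta$ by intersecting, over rational endpoints, the open sets of $f$ whose resolvent set meets a given arc (Lemma \ref{emptyinterior}), and then proves density of this $\mathcal R$ \emph{directly}: starting from a periodic $f$ it builds an infinite sequence of corrections $s_k$, at each stage reopening all gaps (Lemma \ref{opengaps}) and sizing $s_k$ against the minimal gap of the previous stage, so that the limit-periodic limit $\tilde f$ has nowhere dense spectrum; the band-length bound (Lemma \ref{smallbands}) enters there to show every point of $\Sigma(\mathcal E_{\tilde f})$ is close to a gap. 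You instead decompose the empty-interior condition as $\bigcap_m\mathcal R_m$ with $\mathcal R_m$ open (your semicontinuity argument, via uniform convergence of sampling functions giving norm convergence of CMV operators as in Proposition \ref{72}, is sound) and dense, and let Baire supply density of the intersection. Since density of a single $\mathcal R_m$ needs only \emph{one} periodic approximant of period $p>2\pi m$ with all gaps open, your route avoids the paper's iterative bookkeeping entirely, which is a real simplification. You also handle perfectness explicitly by intersecting with $\mathcal G$ from Theorem \ref{t.gordon} and noting that an isolated spectral point of a unitary operator is an eigenvalue; the paper's proof is silent on this point, so your treatment is actually more complete.

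The one step you assert rather than prove, your ``OPUC subdivision lemma,'' is precisely the content of the paper's Lemmas \ref{opengaps} and \ref{smallbands}, both imported from \cite{Simon} (Theorems 11.13.1, 11.1.3 together with 10.11.21), so the gap is fillable with exactly the inputs the paper uses. Two small corrections to your mechanism: first, a $p$-periodic $f_0$ already lies in $P_k$ for all large $k$ (Proposition \ref{cantorsubgroups}), and viewed with period $p_k$ its band structure is the old one subdivided with \emph{closed} gaps, so the generic perturbation in the finite-dimensional space of period-$p_k$ coefficients is needed only to open those gaps; second, no perturbation is needed to shrink the bands, since for any period-$p_k$ operator each band carries equilibrium measure $1/p_k$ while the density of the equilibrium measure relative to normalized Lebesgue measure exceeds $1$, forcing each band to have length at most $2\pi/p_k$ automatically. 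With the bands that short and all gaps open, no arc of length $1/m$ can sit inside the spectrum, which is exactly what density of $\mathcal R_m$ requires.
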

\begin{thm}\label{t.ac}
There exists a dense set $\mathcal R'\subseteq C(\Omega,\D)$ such that for every $f\in\mathcal R'$ and $\omega\in \Omega$, the spectrum of the extended CMV operator, $\mathcal E$ is a Cantor set of positive Lebesgue measure and all spectral measures are purely absolutely continuous.
\end{thm}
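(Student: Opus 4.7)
The plan is to mirror the strategy of Damanik--Gan for the discrete Schr\"odinger operator, with the extended CMV operator replacing it. The starting point is that sampling functions factoring through some finite quotient of $\Omega$ (those $f$ with $f\circ T^p = f$ for some $p$, so that $\alpha_j = f(T^j\omega)$ is a periodic Verblunsky sequence) form a dense subset of $C(\Omega,\D)$. It therefore suffices to show that, given such a periodic $f_0$ and any $\e > 0$, one can find $f \in C(\Omega,\D)$ with $\|f - f_0\|_\infty < \e$ whose extended CMV operator has purely absolutely continuous spectrum supported on a Cantor set of positive Lebesgue measure.

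For a periodic sampling function $f_0$, the operator $\mathcal E_{f_0}$ is a standard periodic CMV operator whose spectrum is a finite union of closed arcs on the unit circle; its spectral measures are purely absolutely continuous, and the Lyapunov exponent of the associated Szeg\H{o} cocycle vanishes identically on the spectrum (see \cite{Simon}). I would construct $f$ as the uniform limit of a sequence of periodic sampling functions $f_n$ with periods $p_n$ satisfying $p_n \mid p_{n+1}$, chosen recursively so that at each step $f_n$ is a small perturbation of $f_{n-1}$ that opens many new gaps in the CMV spectrum without reclosing any previously opened ones, and so that the Lebesgue measure removed from the spectrum at step $n$ is summable in $n$.

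Two ingredients drive the iterative step. The first is an OPUC analogue of the gap-opening method of Moser exploited in \cite{Avila} and \cite{Damanik-Gan}: given a periodic extended CMV operator of period $p_n$, a small, carefully chosen perturbation of period $k p_n$ opens a prescribed collection of previously closed gaps in the discriminant structure, keeps the spectrum in a small Hausdorff neighborhood of the original, and alters the band widths in a controlled manner. By making the perturbations geometrically small in sup norm and geometrically small in removed Lebesgue measure, the $f_n$ form a Cauchy sequence with limit $f \in C(\Omega,\D)$ arbitrarily close to $f_0$, while $\Sigma(\mathcal E_f)$ is nowhere dense and of Lebesgue measure bounded below by a preassigned positive quantity. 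The second ingredient is Kotani theory for OPUC (developed in \cite{Simon}): for ergodic Verblunsky coefficients, the essential closure of the zero Lyapunov exponent set coincides with the essential support of the absolutely continuous part of the spectrum. Combined with the upper semicontinuity of the Lyapunov exponent in $f$ and the fact that each $\mathcal E_{f_n}$ has zero Lyapunov exponent throughout its spectrum, this forces $\mathcal E_f$ to have purely absolutely continuous spectrum.

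The main obstacle is arranging the gap-opening so that simultaneously (i) infinitely many gaps are opened, producing a nowhere dense limit spectrum; (ii) the Lebesgue measure of $\Sigma(\mathcal E_f)$ remains bounded below by a fixed positive quantity throughout the construction; and (iii) zero Lyapunov exponent is preserved on enough of the limit spectrum to apply Kotani. Of these, (ii) is the most delicate: it requires a quantitative OPUC version of the classical band-gap estimate bounding the size of an opened gap in terms of the perturbation size, which in turn depends on the Szeg\H{o}-type analogue of the Thouless formula relating Lyapunov exponent to the integrated density of states, together with careful bookkeeping of band widths under iteration. Once this quantitative estimate is in place, the extraction of the Cauchy limit, the verification that $\Sigma(\mathcal E_f)$ is a Cantor set of positive measure (with the no-isolated-points property following from minimality of $T$), and the Kotani-based conclusion of absolute continuity all proceed in close analogy with the Schr\"odinger case.
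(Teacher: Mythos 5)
There is a genuine gap in the step that is supposed to deliver \emph{purely} absolutely continuous spectrum. Kotani theory for ergodic Verblunsky coefficients only identifies the essential support of the absolutely continuous \emph{part} of the spectral measures with the essential closure of the set where the Lyapunov exponent vanishes; it does not exclude singular components supported on that same set (the critical almost Mathieu operator is the standard cautionary example: zero Lyapunov exponent on the spectrum, yet no absolutely continuous spectrum at all). So even if you arrange (i)--(iii), your conclusion ``this forces $\mathcal E_f$ to have purely absolutely continuous spectrum'' does not follow. Moreover, the semicontinuity you invoke points the wrong way: the Lyapunov exponent is an infimum of continuous functions of the sampling function, hence upper semicontinuous, so from $\gamma(f_n)=0$ on $\Sigma(\mathcal E_{f_n})$ and $f_n\to f$ you only get $0\le\gamma(f)$, which is vacuous; keeping the Lyapunov exponent zero on the limit spectrum is itself a delicate quantitative matter (this is precisely the hard part of Avila's work), not a consequence of soft semicontinuity.

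The paper's route avoids both issues and is worth comparing. It reuses the Cantor-spectrum construction of Theorem \ref{t.cantor} verbatim (gap opening comes from the genericity of open gaps for periodic Verblunsky coefficients, Theorem 11.13.1 of \cite{Simon}, not from a Moser-type lemma or a Thouless formula), and adds one extra requirement on each perturbation $s_k$: the densities $g^k_u$ of the spectral measures of the periodic approximants form a Cauchy sequence in $L^t(\partial\D)$ for a fixed $t\in(1,2)$. The key inputs are a uniform $L^t$ bound on $\left|\frac{1}{q\pi}\frac{d\psi}{d\theta}\right| = \left|\frac{\Delta'(e^{i\theta})}{2q\pi\sin\psi}\right|$ (Lemma \ref{3.1}), the resulting uniform bound $\int|g^k_u|^t\,d\theta\le Q(u,t)$ (Lemma \ref{l.one}), and the Avron--Simon convergence lemma (Lemma \ref{l.two}); then for any finite union of open arcs $A$, H\"older gives $\langle u,P_Au\rangle\le Q(u,t)|A|^{1/q}$ with $1/q+1/t=1$, which yields absolute continuity of the spectral measure of every finitely supported $u$, hence purity. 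Note also that your worry (ii) about keeping the Lebesgue measure of the spectrum bounded below during the construction is unnecessary in this scheme: once the spectrum is purely absolutely continuous (and nonempty), it automatically has positive Lebesgue measure. If you want to salvage your outline, you should replace the Kotani step by this kind of quantitative $L^t$ control of the periodic spectral densities along the approximation.
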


{}
In short, we prove an OPUC version of \cite{Damanik-Gan}, except for the singular continuous spectrum results proved in \cite{Avila}. An initial attempt to adapt the results of \cite{Avila} was unsuccessful, due to the difficulty of describing and proving an OPUC analogue of Lemma 2.3 in that paper. However, I believe that the analogous singular continuous spectrum result for OPUC is true nevertheless: in other words, that for a dense $G_\delta$ set in $C(\Omega,\D)$ the corresponding spectrum is purely singular continuous.

{}
To prove these theorems, we will rely on the fact that limit-periodic sequences are very well approximated by periodic sequences. The following remarks about periodic sampling functions will prove essential:

\begin{defn}
A sampling function $f$ is $k$-periodic if $f(T^k\omega)=f(\omega)$ for every $\omega\in\Omega$. The space of periodic sampling functions is denoted $P$. 
\end{defn}
We note that Lemma 2.4 in \cite{Damanik-Gan} applies for our context. This lemma states
\begin{prop}\label{cantorsubgroups}
There exists a sequence of Cantor subgroups $\Omega_k$ of $\Omega$ of finite index $p_k$ such that $\bigcap \Omega_k=\{0\}$. The sampling functions that are defined on $\Omega/\Omega_k$ are periodic with period $p_k$. We denote those sampling functions as the space $P_k\subset P$. All periodic sampling functions belong to some $P_k$.
\end{prop}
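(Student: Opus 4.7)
My approach is to pass to the Pontryagin dual $\hat\Omega$, which, since $\Omega$ is a compact totally disconnected abelian group, is a discrete torsion abelian group. I would write $\hat\Omega$ as the increasing union $\bigcup_k F_k$ of its finite subgroups and define $\Omega_k := F_k^\perp$, the annihilator of $F_k$ inside $\Omega$. By Pontryagin duality, $\Omega_k$ is then a closed subgroup of $\Omega$ of finite index $p_k = |F_k|$ with $\Omega/\Omega_k \cong F_k$, and $\bigcap_k \Omega_k = (\bigcup_k F_k)^\perp = \hat\Omega^\perp = \{0\}$.

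Each $\Omega_k$ inherits compactness, total disconnectedness and commutativity from $\Omega$ as a closed subgroup. Since $\Omega$ has no isolated points it is infinite, so the finite-index subgroup $\Omega_k$ is also infinite; being an infinite compact topological group it cannot be discrete, and in the totally disconnected setting this forces it to have no isolated points. Hence $\Omega_k$ is itself a Cantor group. The ``$\Leftarrow$'' direction of the periodicity claim is then immediate: since $\Omega/\Omega_k$ is a finite abelian group of order $p_k$, we have $p_k a \in \Omega_k$, so $f(T^{p_k}\omega) = f(\omega)$ for every $f$ factoring through $\Omega/\Omega_k$, placing such $f$ in $P$.

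For the converse, suppose $f \in P$ has period $p$, so that $f$ is constant on cosets of $H := \overline{\langle pa\rangle}$. Minimality of $T$ makes $\langle a\rangle$ dense in $\Omega$, whence $\Omega = \overline{\langle a\rangle} \subseteq \bigcup_{i=0}^{p-1}(ia + H)$ and therefore $[\Omega:H] \le p < \infty$. The annihilator $H^\perp \subseteq \hat\Omega$ is thus a finite subgroup and is contained in some $F_k$; dualizing gives $\Omega_k = F_k^\perp \subseteq H^{\perp\perp} = H$, so $f$ is also constant on $\Omega_k$-cosets and belongs to $P_k$. The step I expect to require the most care is this final annihilator argument, matching the invariance subgroup of an arbitrary periodic function against the preferred filtration $\{\Omega_k\}$; it ultimately reduces to the elementary observation that any finite subgroup of the discrete group $\hat\Omega = \bigcup_k F_k$ already lies inside some $F_k$, but spelling out the duality bookkeeping is the one nontrivial piece.
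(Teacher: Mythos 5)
Your argument is correct in substance, but it takes a genuinely different route from the paper, which gives no proof of its own: it simply invokes Lemma 2.4 of \cite{Damanik-Gan}, whose proof runs on the group side rather than the dual side. There one uses the fact (van Dantzig's theorem) that a compact totally disconnected group has a neighborhood basis at $0$ consisting of compact open subgroups, each of finite index by compactness; choosing a decreasing such sequence with trivial intersection gives the $\Omega_k$ directly, and the inclusion $P\subseteq\bigcup_k P_k$ follows because the invariance subgroup $H=\overline{\langle pa\rangle}$ of a $p$-periodic $f$ is closed of finite index, hence open, hence contains some member $\Omega_k$ of the neighborhood basis. Your Pontryagin-dual version trades this for the bi-annihilator bookkeeping $\Omega_k=F_k^\perp\subseteq H^{\perp\perp}=H$, which you carry out correctly; your identification of $H$ and the finite-index bound via minimality (covering $\Omega$ by the $p$ closed cosets $ia+H$) is exactly the step the group-side proof needs as well, so the two mechanisms are dual formulations of the same structure, with yours being self-contained where the paper defers to the literature.

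Two points to tighten. First, writing $\hat\Omega=\bigcup_k F_k$ as a \emph{countable} increasing union of finite subgroups needs $\hat\Omega$ to be countable; this is automatic if ``Cantor group'' is read as homeomorphic to the Cantor set (hence second countable), and with the paper's bare definition it follows from the existence of the minimal translation: $\chi\mapsto\chi(a)$ embeds $\hat\Omega$ into the circle group, and since $\hat\Omega$ is torsion its image lies in the countable group $\Q/\Z$. You should say one of these things explicitly. Second, the reason $\Omega_k$ has no isolated points is simply homogeneity: an isolated point would force the group to be discrete, which is impossible for an infinite compact group; total disconnectedness plays no role in that step.
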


\end{section}
\begin{section}{Absence of Point Spectrum}
{}
In this section we prove Theorem \ref{t.gordon}. Here we adapt the Gordon potential idea presented in \cite{Gordon} to the OPUC case, specifically the treatment given in \cite{Cycon}.

{}
We will begin by presenting an explanation of transfer matrices related to $\mathcal E$. Let us first define an \em$\ell^\infty$-eigenvector \em of $\mathcal E$ as an $\ell^\infty$ vector $u$ that satisfies $zu=\mathcal E u$ for some $z$, but such that $u$ is not necessarily in $\ell^2$. Based on the structure given in (\ref{CMVmatrix}), we determine that for $n$ odd, an $\ell^{\infty}$-eigenvector $u$ of $\mathcal E $ must satisfy
\begin{align}
zu_{n+1}=&\bar\alpha_{n+1}\rho_n u_n-\bar \alpha_{n+1}\alpha_n u_{n+1}+\bar \alpha_{n+2}\rho_{n+1} u_{n+2}+\rho_{n+2}\rho_{n+1} u_{n+3}\label{CMV1},\\ 
zu_{n+2}=&\rho_{n+1}\rho_n u_n-\rho_{n+1}\alpha_n u_{n+1}-\bar \alpha_{n+2}\alpha_{n+1} u_{n+2}-\rho_{n+2}\alpha_{n+1} u_{n+3}\label{CMV2}.
\end{align}
We multiply Equation (\ref{CMV1}) by $\alpha_{n+1}$ and Equation (\ref{CMV2}) by $\rho_{n+1}$ and then add the two equations to eliminate the $u_{n+3}$ term. This gets us
\[ z\rho_{n+1}u_{n+2}=(\bar\alpha_{n+1}\rho_n\alpha_{n+1}+\rho_{n+1}^2\rho_n)u_n-((\bar \alpha_{n+1}\alpha_n+z)\alpha_{n+1}+\rho_{n+1}^2\alpha_n)u_{n+1}.\]
Similarly, if we multiply Equation (\ref{CMV1}) by $\bar \alpha_{n+2}\alpha_{n+1}+z$ and Equation (\ref{CMV2}) by $\bar \alpha_{n+2}\rho_{n+1}$, adding the two equations eliminates the $u_{n+2}$ term and we have
\begin{align*}
-z\rho_{n+2}\rho_{n+1}u_{n+3}=&(\bar\alpha_{n+1}\rho_n (\bar \alpha_{n+2}\alpha_{n+1}+z)+\rho_{n+1}^2\rho_n \bar \alpha_{n+2})u_n\\&-((\bar \alpha_{n+1}\alpha_n+z)(\bar \alpha_{n+2}\alpha_{n+1}+z)+\rho_{n+1}^2\alpha_n\bar \alpha_{n+2}) u_{n+1}.
\end{align*}
These equations give us a two-step transfer matrix
\[
A_n=\frac{1}{z\rho_{n+1}\rho_{n+2}}
\left(
\begin{array}{cc}
a_{11}&a_{12}\\
a_{21}&a_{22}
\end{array}
\right),
\]
with entries
\begin{align*}
a_{11}=&\rho_{n+2}(\bar\alpha_{n+1}\alpha_{n+1}\rho_n+\rho_{n+1}^2\rho_n),\\
a_{12}=&-\rho_{n+2}[(\bar\alpha_{n+1}\alpha_n+z)\alpha_{n+1}+\rho_{n+1}^2\alpha_n],\\
a_{21}=&-[(\bar\alpha_{n+2}\alpha_{n+1}+z)\bar\alpha_{n+1}\rho_n+\rho_{n+1}^2\rho_n\bar \alpha_{n+2}],\\
a_{22}=& (\bar \alpha_{n+2}\alpha_{n+1}+z)(\bar \alpha_{n+1}\alpha_n+z)+\bar \alpha_{n+2}\rho_{n+1}^2\alpha_{n}.
\end{align*}
This satisfies
\begin{equation}\label{recurrence}
\left(
\begin{array}{c}
u_{n+2}\\
u_{n+3}
\end{array}
\right)=
A_n
\left(
\begin{array}{c}
u_n\\
u_{n+1}
\end{array}
\right).
\end{equation}
We can compute that $\det A_n=\rho_n/\rho_{n+2}$, which is never zero. Hence $A_n$ is always invertible.

\begin{note}
It is possible to express the recurrence relation so that the transfer matrices have determinant $1$. Let $\mathfrak A_n$ be the matrix that results from multiplying the top row of $A_n$ by $\rho_{n+2}$ and the left column of $A_n$ by $1/\rho_n$. Consequently, we can express Equation (\ref{recurrence}) instead as 
\begin{equation}
\left(
\begin{array}{c}
\rho_{n+2}u_{n+2}\\
u_{n+3}
\end{array}
\right)=
\mathfrak A_n
\left(
\begin{array}{c}
\rho_nu_n\\
u_{n+1}
\end{array}
\right),
\end{equation}
where this time the matrix $\mathfrak A_n$ has determinant $1$. This adjustment is based on a similar trick used to obtain determinant $1$ transfer matrices in the OPRL case. We choose not to use this expression for the recurrence relation, since it is not essential for our context that the transfer matrices have determinant $1$, and the $\rho_n$ terms complicate the proof slightly.
\end{note}
{}
Let $\mathbb D(r)$ be an open disk of radius $r<1$ centered at the origin. Note that $a_{11}, a_{12}, a_{21}, a_{22}$ (as well as the fraction $1/z\rho_{n+1}\rho_{n+2}$) are all continuous functions of $\alpha_n,\alpha_{n+1},\alpha_{n+2}$. In addition, if we restrict $\alpha_n,\alpha_{n+1}, \alpha_{n+2}$ to $\mathbb D(r)$, $a_{11}, a_{12}, a_{21}, a_{22}$ and  $1/z\rho_{n+1}\rho_{n+2}$ are also bounded. There must exist a positive real-valued function $\gamma(k,q,r)$ with $k,q\in \mathbb Z_+$ so that if $\{\tilde \alpha_i\}$ are a sequence of Verblunsky coefficients corresponding to the matrices $\tilde A_n$, and if $\vert\tilde\alpha_n-\alpha_n\vert, \vert\tilde\alpha_{n+1}-\alpha_{n+1}\vert,\vert\tilde\alpha_{n+2}-\alpha_{n+2}\vert$ are all less than $\gamma(k,q,r)$, this implies that $\norm A_n-\tilde A_n\norm< k^{-q}$.

\begin{prop}\label{Gordon}Let $\alpha(n)$ and $\alpha_k(n)$, $k\in \mathbb Z_+$, be two sided-sequences in $\D^{\infty}$. Let  $\alpha_k$ be periodic with even period $q_k$. We then know that there must exist $r_k<1$ so that the sequence $\alpha_k(n)$ lies in $\mathbb D(r_k)$, the open disk of radius $r_k$ centered at the origin. If
 \[\sup_{-2q_k+1\leq n\leq 2q_k+1}\vert \alpha_k(n)-\alpha(n)\vert\leq \gamma(k,q_k,r_k),\]
and $\mathcal E$ is the extended CMV operator corresponding to Verblunsky coefficients $\alpha(n)$, there is no eigenvector $u$ of $\mathcal E$ in $\ell^2(\Z)$.
\end{prop}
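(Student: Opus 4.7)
The plan is to adapt the classical Gordon argument to the OPUC two-step transfer matrices: the periodic approximant $\alpha_k$ gives rise to a one-period transfer matrix $M_k$ of determinant $1$, for which Cayley-Hamilton yields a quantitative non-decay inequality, and the choice of $\gamma$ guarantees that this bound transfers to the true Verblunsky sequence $\alpha$. I set $M_k := \tilde A_{q_k-1}\tilde A_{q_k-3}\cdots \tilde A_3\tilde A_1$ (with $q_k/2$ factors, which is where the even-period hypothesis enters), where $\tilde A_n$ denotes the transfer matrix built from $\alpha_k$. Since $\det A_n = \rho_n/\rho_{n+2}$, the determinant telescopes across one period to $\rho_k(1)/\rho_k(q_k+1) = 1$, so $\det M_k = 1$. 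Cayley-Hamilton then gives $M_k^2 - (\mathrm{tr}\, M_k) M_k + I = 0$, and multiplying by $M_k^{-1}$ yields $M_k^{-1} = (\mathrm{tr}\, M_k) I - M_k$. A case split on $|\mathrm{tr}\, M_k| \leq 1$ versus $|\mathrm{tr}\, M_k| > 1$ combined with the triangle inequality produces the Gordon-type bound
\[
\max\bigl(\norm M_k^{-1}v\norm,\ \norm M_k v\norm,\ \norm M_k^2 v\norm\bigr)\ \geq\ \tfrac{1}{2}\norm v\norm,\qquad v\in \C^2.
\]

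Next I would transfer the bound from $M_k$ to the true transfer matrices. Let $\Phi_j^\ell := A_{j+\ell-2}\cdots A_{j+2}A_j$ denote the true $\ell$-step product, so that $\Phi_j^\ell (u_j,u_{j+1})^T = (u_{j+\ell},u_{j+\ell+1})^T$. The hypothesis together with the definition of $\gamma(k,q_k,r_k)$ yields $\norm A_n - \tilde A_n\norm < k^{-q_k}$ for every odd $n \in [-2q_k+1, 2q_k-1]$; combined with a uniform bound $\norm \tilde A_n\norm \leq C_k$ on $\D(r_k)$, the standard telescoping product estimate gives
\[
\norm \Phi_1^{q_k} - M_k\norm,\ \norm \Phi_{-q_k+1}^{q_k} - M_k\norm,\ \norm \Phi_1^{2q_k} - M_k^2\norm\ =\ O\bigl(q_k C_k^{q_k} k^{-q_k}\bigr),
\]
which tends to $0$ as $k\to\infty$ (in the applications $r_k\le r<1$ is uniform, so $C_k\equiv C$ and $k>C$ suffices). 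The backward product equals $M_k$ because the $q_k$-periodicity of $\tilde A_n$ forces $\widetilde\Phi_{-q_k+1}^{q_k} = M_k$. Since $\det M_k = 1$ and each $\det A_n$ is bounded away from $0$, the resolvent identity $\Phi^{-1} - M_k^{-1} = -M_k^{-1}(\Phi - M_k)\Phi^{-1}$ similarly gives $\norm (\Phi_{-q_k+1}^{q_k})^{-1} - M_k^{-1}\norm \to 0$.

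Finally, let $u\in\ell^2(\Z)$ be a putative eigenvector of $\mathcal E$ and set $v:=(u_1,u_2)^T$. A direct inspection of (\ref{CMV1})--(\ref{CMV2}), using $\rho_j>0$, shows that $v=0$ forces $u\equiv 0$ by stepping forward and backward through the recurrence, so I may assume $v\neq 0$. The approximation above identifies $\Phi_1^{q_k}v,\ \Phi_1^{2q_k}v,\ (\Phi_{-q_k+1}^{q_k})^{-1}v$ with $(u_{q_k+1},u_{q_k+2})^T,\ (u_{2q_k+1},u_{2q_k+2})^T,\ (u_{-q_k+1},u_{-q_k+2})^T$, and each approximates $M_k v,\ M_k^2 v,\ M_k^{-1}v$ with error $o(1)\norm v\norm$. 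Plugging into the Gordon bound yields
\[
\max\bigl(\norm (u_{-q_k+1},u_{-q_k+2})\norm,\ \norm (u_{q_k+1},u_{q_k+2})\norm,\ \norm (u_{2q_k+1},u_{2q_k+2})\norm\bigr)\ \geq\ \tfrac{1}{2}\norm v\norm - o(1).
\]
In the standard setting $q_k\to\infty$, which is natural for limit-periodic sequences and holds in the application to Theorem \ref{t.gordon}, the $\ell^2$ assumption forces the left-hand side to $0$, contradicting $v\neq 0$. The main obstacle is the possible unboundedness of $|\mathrm{tr}\, M_k|$: a two-vector Gordon inequality using only $M_k v$ and $M_k^2 v$ degenerates when $|\mathrm{tr}\, M_k|$ is large, and the backward translate approximating $M_k^{-1}v$ is precisely the new ingredient needed to handle that regime via the second branch of the case split. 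A secondary technicality is that $\norm M_k\norm$ can be exponential in $q_k$, so the approximation rate $k^{-q_k}$ in $\gamma$ must beat the product expansion $C^{q_k}$, which is exactly the role of choosing $\gamma$ small as a function of $k,q_k,r_k$.
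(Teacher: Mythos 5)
Your proposal is correct, and it follows the same global Gordon strategy as the paper (compare the true solution with the periodic one via the telescoped product estimate that the definition of $\gamma(k,q_k,r_k)$ is designed to feed, then contradict $u\in\ell^2$ as $q_k\to\infty$), but the key matrix lemma is genuinely different. The paper deliberately uses the ``four block'' inequality of Lemma \ref{matrix} --- $\max(\norm Ax\norm,\norm A^2x\norm,\norm A^{-1}x\norm,\norm A^{-2}x\norm)\geq 1/2$ for an \emph{arbitrary invertible} $2\times 2$ matrix --- precisely because the individual transfer matrices only satisfy $\det A_n=\rho_n/\rho_{n+2}$, and it therefore compares $\phi$ with $\phi_k$ at the four sites $\pm q_k+1$, $\pm 2q_k+1$, which is why the hypothesis asks for closeness on all of $[-2q_k+1,2q_k+1]$. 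You instead observe that the one-period monodromy $M_k$ of the periodic approximant has determinant $1$, since the determinants telescope to $\rho_k(1)/\rho_k(q_k+1)=1$ by $q_k$-periodicity (with evenness of $q_k$ keeping the two-step indices in phase), so the classical three-block Cayley--Hamilton case split applies and you only need data on $[-q_k+1,2q_k+1]$. This is a nice point: the determinant-$1$ assumption that the paper's Note says it wants to avoid is in fact available for the monodromy even though it fails for each $A_n$, whereas the paper's four-block choice avoids any determinant bookkeeping at the cost of one extra block and a wider window (still covered by the hypothesis). Your two explicit caveats --- that one needs $q_k\to\infty$ for the $\ell^2$ contradiction, and that the constants bounding $\norm A_n\norm$, $\norm A_n^{-1}\norm$ must be uniform in $k$ (equivalently $r_k$ bounded away from $1$) so that $k^{-q_k}$ beats the exponential product growth --- are equally present, though left implicit, in the paper's own proof, where the constants $C,D$ are treated as $k$-independent and the conclusion is phrased as a $\limsup$ over odd $n$; both hold in the application to Theorem \ref{t.gordon}.
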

We first introduce the following lemma:
\begin{lemma}\label{matrix}
Let $A$ be an invertible $2\times 2$ matrix, and $x$ a vector of norm $1$. Then
\[ \max(\norm Ax\norm, \norm A^2 x\norm, \norm A^{-1} x\norm, \norm A^{-2} x\norm)\geq 1/2.\]
\end{lemma}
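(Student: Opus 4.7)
The plan is to argue by contradiction: assume $\norm Ax\norm$, $\norm A^2 x\norm$, $\norm A^{-1}x\norm$, $\norm A^{-2}x\norm$ are all strictly less than $1/2$, and derive a contradiction purely in terms of the two scalar invariants $t := \mathrm{tr}(A)$ and $d := \det(A)$. The engine is the Cayley--Hamilton identity $A^2 - tA + dI = 0$, which allows me to express any three consecutive powers of $A$ (in either direction) as a scalar combination of one another.

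Concretely, I would multiply this identity on the right by $A^{-j}$ for $j=0,1,2$ (using $d\neq 0$, since $A$ is invertible) and apply each resulting operator equation to $x$, producing the three vector identities
\begin{align*}
dx &= tAx - A^2 x,\\
tx &= Ax + d\, A^{-1} x,\\
x &= t\, A^{-1} x - d\, A^{-2} x.
\end{align*}
Taking norms and using $\norm x\norm = 1$ together with the hypothetical upper bound $1/2$ on the remaining four norms, the triangle inequality yields the three scalar inequalities
\[ 2|d| - |t| < 1,\qquad 2|t| - |d| < 1,\qquad 2 < |t| + |d|. \]

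Finally, adding the first two inequalities forces $|t| + |d| < 2$, in direct conflict with the third; this contradicts the assumption, so at least one of the four norms must be $\geq 1/2$. I do not anticipate a serious obstacle beyond this algebraic maneuver: the only subtle point is to apply Cayley--Hamilton in both its forward form and its inverse forms (obtained by multiplication by $A^{-1}$ and $A^{-2}$) so that $A$ and $A^{-1}$ enter symmetrically, which is exactly what guarantees the three resulting inequalities are mutually incompatible.
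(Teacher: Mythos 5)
Your proof is correct, and it is complete where the paper is not: the paper offers no argument of its own for this lemma, simply citing Section 10.2 of Cycon--Froese--Kirsch--Simon. Your three identities (Cayley--Hamilton $A^2-tA+dI=0$ applied to $x$ after multiplication by $I$, $A^{-1}$, $A^{-2}$) are exactly right; the strictness of the three scalar inequalities is unproblematic, since in each the term carrying coefficient $1$ (respectively $|d|\neq 0$) is strictly below $1/2$ (respectively $|d|/2$); and adding the first two gives $|t|+|d|<2$, contradicting the third. Compared with the classical Gordon-lemma argument behind the citation, which also rests on Cayley--Hamilton but is usually run as a case distinction on $|\mathrm{tr}\, A|$ (and in its ``three-block'' form genuinely needs $\det A=1$), your version treats the trace and determinant symmetrically, avoids any case analysis, and works verbatim for an arbitrary invertible $A$. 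That generality is precisely what this paper needs: its two-step transfer matrices satisfy $\det A_n=\rho_n/\rho_{n+2}$, which is in general not $1$, and this is exactly why the author insists on the ``four-block'' version. So your argument could be inserted in place of the citation and would make that remark self-contained.
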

\begin{proof}
This proof is in Section 10.2 of \cite{Cycon}. 
\end{proof}
\begin{note}
This is known as the ``four block'' version of the Gordon idea. We don't use the ``three block'' version of the idea since we don't want to assume our matrices have determinant $1$.
\end{note}
\begin{proof}[Proof of Proposition]
Let $\mathcal E_k$ be the extended CMV operator corresponding to the Verblunsky coefficients $\alpha_k(n)$. Let $w^{(k)}$ be a nonzero $\ell^\infty$-eigenvector of $\mathcal E_k$ with the same initial condition as the nonzero $\ell^\infty$-eigenvector $u$ of $\mathcal E$: i.e.  $u_1=w^{(k)}_1,u_2=w^{(k)}_2$.  Note that $u_1, u_2$ cannot both be zero, since otherwise the recurrence would imply that $u$ is zero. Define for odd $n$
\[ 
\phi(n)=\left(
\begin{array}{c}
u_n\\
u_{n+1}
\end{array}
\right),
\phi_k(n)=\left(
\begin{array}{c}
w^{(k)}_n\\
w^{(k)}_{n+1}
\end{array}
\right).
\]
and $A_n^{(k)}$, as the transfer matrix of $\mathcal E_k$. Assume for now that the odd number $n$ is positive.
In the following calculations, bear in mind that all the suprema only run through odd $n$, and the sums only run through odd $j$. Also, let $\Phi=\norm \phi(1)\norm$.
\begin{align*}
\sup_{ 1\leq n\leq 2q_k+1} &\norm \phi(n)-\phi_k(n)\norm\\
\leq \sup_{ 1\leq n\leq 2q_k+1} &\norm  A_{n-2}A_{n-4}\ldots A_3 A_1-A_{n-2}^{(k)}A_{n-2}^{(k)}\ldots A_3^{(k)}A_1^{(k)}\norm\Phi,\\
= \sup_{ 1\leq n\leq 2q_k+1}& \left\vert\left\vert \sum_{1\leq j\leq n-2} A_{n-2}\ldots A_j A_{j-2}^{(k)}\ldots A_1^{(k)}-A_{n-2}\ldots A_{j+2} A_{j}^{(k)}\ldots A_1^{(k)}\right\vert\right\vert\Phi,\\
 \leq \sup_{ 1\leq n\leq 2q_k+1}& \sum_{1\leq j\leq n-2}\norm  A_{n-2}\ldots A_{j+2}(A_j-A_j^{(k)})  A_{j-2}^{(k)}\ldots A_1^{(k)}\norm\Phi,\\
 \leq \sup_{ 1\leq n\leq 2q_k+1} &\left(\sum_{1\leq j\leq n-2}\norm  A_{n-2}\ldots A_{j+2}\norm \cdot\norm A_j-A_j^{(k)}\norm  \cdot\norm A_{j-2}^{(k)}\ldots A_1^{(k)}\norm\right)\Phi.\\
\end{align*}

 The entries of the transfer matrices are all bounded. Note also that since \\ $\sup_{n\leq 2q_k+1} \vert \alpha(n)-\alpha_k(n)\vert\leq \gamma(k,q_k,r_k)$, it is true that $\norm A_j-A_j^{(k)}\norm\leq k^{-q_k}$. This means that for some constants $C,D$ we have the estimate
\begin{align*}
\sup_{ 1\leq n\leq 2q_k+1, n \text{ odd}} \norm \phi_k(n)-\phi(n)\norm\leq& \sup_{ 1\leq n\leq 2q_k+1, n \text{ odd}} C\vert n\vert e^{\vert Dn
\vert} k^{-q_k},\\
=& C(2q_k+1)e^{D(2q_k+1)} k^{-q_k}.
\end{align*}
If instead $n$ is negative, the preceding argument proceeds completely analogously.\\

We have now that 
\[\max_{a=\pm 1,\pm 2}\norm \phi(aq_k+1)-\phi_k (aq_k+1)\norm \to 0.\]
as $k\to\infty$. Let us set $A=A^{(k)}_{q_k-1}\ldots A^{(k)}_3A^{(k)}_1$. Note that, due to the fact that the $\{\alpha^{(k)}_i\}$ are $q_k$-periodic, we then have 
\begin{align*}
\phi_k(2q_k+1)=&A^2\left ( 
\begin{array}{c}
u_1\\
u_2
\end{array}
\right), \\
\phi_k(q_k+1)=&A\left ( 
\begin{array}{c}
u_1\\
u_2
\end{array}
\right), \\
\phi_k(-q_k+1)=&A^{-1}\left ( 
\begin{array}{c}
u_1\\
u_2
\end{array}
\right), \\
\phi_k(-2q_k+1)=&A^{-2}\left ( 
\begin{array}{c}
u_1\\
u_2
\end{array}
\right).
\end{align*}
Thus by applying Lemma \ref{matrix} we obtain
\[\max_{a=\pm 1,\pm 2}\norm \phi_k(aq_k+1)\norm \geq \frac{1}{2}\norm \phi_k(1)\norm= \frac{\sqrt{\vert u_1\vert^2+\vert u_2\vert^2}}{2}.\]
Then
\begin{align*}
\limsup_{n \text{ odd integer }}\frac{\vert u_n\vert^2+\vert u_{n+1}\vert^2}{\vert u_1\vert^2+\vert u_2\vert^2}\geq\limsup_{q_k} \frac{\max_{a=\pm 1,\pm 2}\norm \phi_k(aq_k+1)\norm^2}{\norm \phi(1)\norm^2}\geq \frac{1}{4}.
\end{align*}
\end{proof}
{}In particular, a CMV operator associated with Verblunsky coefficients that satisfy the conditions of Proposition \ref{Gordon} must have empty point spectrum. 

\begin{cor}\label{c.gordon}
Consider a two-sided sequence of Verblunsky coefficients $\alpha(n)$ such that there exists a sequence of even positive integers $q_k\to \infty$ so that 
\[
\max_{-q_k+1\leq n\leq q_k+1}\vert \alpha(n)-\alpha(n\pm q_k)\vert\leq \frac{\gamma(k, q_k,r_k)}{4}.
\]
Here, $r_k$ is a positive real number less than $1$ for which $\alpha(-2q_k+1),\ldots, \alpha(2q_k+1)$ all lie in a disk of radius $r_k$ centered at the origin.
The CMV operator associated with these Verblunsky coefficients has no point spectrum.
\end{cor}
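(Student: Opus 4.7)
The plan is to reduce the corollary directly to Proposition \ref{Gordon} by exhibiting, for each $k$, an explicit $q_k$-periodic approximant of $\alpha$ that satisfies the hypotheses of the proposition. The most natural choice is to let $\alpha_k$ be the $q_k$-periodic extension of the finite block $\alpha(0),\alpha(1),\ldots,\alpha(q_k-1)$; equivalently $\alpha_k(n)=\alpha(m)$, where $m\in\{0,\ldots,q_k-1\}$ is the residue of $n$ modulo $q_k$. Since $q_k$ is even and $\alpha(0),\ldots,\alpha(q_k-1)$ all lie in $\mathbb D(r_k)$ by assumption, $\alpha_k$ inherits both the periodicity with even period $q_k$ and the containment in $\mathbb D(r_k)$ required by the proposition.

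Next I would verify the approximation estimate
\[\sup_{-2q_k+1\leq n\leq 2q_k+1}|\alpha_k(n)-\alpha(n)|\leq\gamma(k,q_k,r_k)\]
by a short case analysis keyed to the value of $j\in\{-2,-1,0,1,2\}$ for which $n$ lies in the $j$-th translated block $[jq_k,(j+1)q_k-1]$. When $j=0$ the difference vanishes; when $j=\pm1$ it equals $|\alpha(n)-\alpha(n\mp q_k)|$, which is directly controlled by $\gamma/4$ via the corollary's hypothesis applied at the index $n\mp q_k\in[-q_k+1,q_k+1]$; and when $j=\pm 2$ one inserts an intermediate shift by $q_k$ and applies the triangle inequality
\[|\alpha(n)-\alpha(n\mp 2q_k)|\leq|\alpha(n)-\alpha(n\mp q_k)|+|\alpha(n\mp q_k)-\alpha(n\mp 2q_k)|\]
to pick up a bound of $2\cdot\gamma/4=\gamma/2$. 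Either way the total is at most $\gamma(k,q_k,r_k)$. The only point demanding care is that each time the hypothesis is invoked its argument genuinely lies in $[-q_k+1,q_k+1]$; one checks this block by block, and the factor of $4$ in the denominator is precisely what buys room for the triangle inequality.

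With the periodic approximants $\alpha_k$ in hand, Proposition \ref{Gordon} applies verbatim and rules out any $\ell^2(\Z)$ eigenvector of $\mathcal E$, which is exactly the assertion that the point spectrum of $\mathcal E$ is empty. The main obstacle is purely bookkeeping at the boundaries of the shifted blocks to confirm that the intermediate indices in the triangle inequality stay in the range where the corollary's hypothesis is in force; the substance of the argument is entirely contained in Proposition \ref{Gordon}.
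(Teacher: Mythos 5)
Your argument is correct and follows exactly the route the paper intends: the paper treats Corollary \ref{c.gordon} as an immediate consequence of Proposition \ref{Gordon} (it offers no separate proof beyond the remark preceding the corollary), and your periodization of the block $\alpha(0),\ldots,\alpha(q_k-1)$ together with the triangle-inequality bookkeeping on the window $[-2q_k+1,2q_k+1]$ is precisely the omitted verification, with the factor $\gamma/4$ indeed leaving room for the bound $\gamma/2\leq\gamma$ required by the proposition.
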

\begin{defn}
A two-sided sequence of Verblunsky coefficients $\alpha(n)\in \D$ that satisfies the conditions of Corollary \ref{c.gordon} is referred to as a \em Gordon sequence\em . 
\end{defn}

\begin{proof}[Proof of Theorem \ref{t.gordon}]
The set of periodic sampling functions is dense in $C(\Omega,\D)$. For every $j$-periodic sampling function $f_j$, choose a positive real number $R(f_j)<1$ so that $\norm f_j\norm< R(f_j)$. Now for $j\in 2\Z_+,k\in\Z_+$, let 
\begin{align*}
\mathcal G_{j,k}=&\{f\in C(\Omega, \D):\text{ $ \exists j$-periodic $f_j$, such that } \norm f\norm <R(f_j),  \\
&\text{ and }\vert f(v)-f_j(v)\vert<\frac{1}{2}\left(\frac{\gamma(k,jk,R(f_j))}{4}\right), v\in \Z\},
\end{align*}

Clearly, $\mathcal G_{j,k}$ is open. For $k\in\Z_+$, let
\[\mathcal G_k=\bigcup_{j=2,j \text{ even}}^\infty \mathcal G_{j,k}.
\]
The set $\mathcal G_k$ is open by construction, and dense since it contains all periodic sampling functions (since we may double the period of every odd-periodic sampling function). Thus 
\[
\mathcal G=\bigcap_{k=1}^\infty \mathcal G_k.
\]
is a dense $G_\delta$ set. We claim that for every $f\in \mathcal G$ and every $\omega\in \Omega$, the sequence of Verblunsky coefficients given by $\alpha(n)=f(T^n\omega)$ is Gordon. \\

Let $f\in \mathcal G$ and $\omega\in\Omega$ be given. Since $f\in \mathcal G_k$ for every $k$, we can find $j_k$-periodic $f_{j_k}$ satisfying 
\[\vert f(v)-f_{j_k}(v)\vert <\frac{1}{2}\left(\frac{\gamma(k,j_kk,R(f_{j_k}))}{4}\right), v\in \Z.\]

Let $q_k = j_k k$, so that $q_k \rightarrow \infty$ as $k
\rightarrow \infty$. Then, we have
\begin{align*}
&\max_{-q_k+1\leq n \leq q_k+1}  \|\alpha (n) - \alpha (n \pm q_k)\|\\
=& \max_{-q_k+1\leq n \leq q_k+1} \|f(T^n \omega) - f(T^{n \pm
q_k} \omega)\|, \\
 =& \max_{-j_kk+1 \leq n
\leq j_k k+1} \|f(T^n\omega) - f_{j_k}(T^n\omega) + f_{j_k}(T^{n \pm j_k k} \omega) - f(T^{n \pm j_k k} \omega)\|, \\
 \leq& \max_{-j_kk+1 \leq n \leq j_k k+1} \|f(T^n \omega) - f_{j_k}(T^n
\omega)\| \\
&+ \max_{-j_kk+1 \leq n \leq j_k k+1} \|f(T^{n \pm j_k k} \omega)
- f_{j_k}(T^{n \pm j_k k} \omega)\|, \\
 <&\frac{\gamma(k,q_k, R(f_{j_k}))}{4}.
\end{align*}
We may, of course simply define $r_k=R(f_{j_k})$. It follows that $\alpha$ is a Gordon sequence.
\end{proof}
\end{section}
\begin{section}{Cantor spectra}
{}
In this section we will prove Theorem \ref{t.cantor}. First, we establish a connection between perturbations of the sampling function and perturbations of the corresponding CMV spectra.
\begin{prop}\label{72}Given two sampling functions $f,g$ with $\norm f-g\norm_\infty<\epsilon^2/72$ for some $\epsilon>0$, and $\mathcal E_f, \mathcal E_g$ denoting the extended CMV operators induced by $f,g$ respectively, we have
\[\norm \mathcal E_{f}-\mathcal E_g\norm \leq \epsilon.\]
\end{prop}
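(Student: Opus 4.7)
The plan is to exploit Simon's standard factorization of the extended CMV matrix as a product of two block-diagonal unitaries, which reduces the operator-norm estimate to a $2\times 2$ calculation. Writing $\mathcal{E}_f=\mathcal{L}_f\mathcal{M}_f$, where $\mathcal{L}_f$ and $\mathcal{M}_f$ are block-diagonal unitaries whose $2\times2$ blocks are
\[
\Theta(\alpha)=\begin{pmatrix}\bar\alpha & \rho_\alpha\\ \rho_\alpha & -\alpha\end{pmatrix},\qquad \rho_\alpha=\sqrt{1-|\alpha|^2},
\]
the telescoping identity $\mathcal{E}_f-\mathcal{E}_g=\mathcal{L}_f(\mathcal{M}_f-\mathcal{M}_g)+(\mathcal{L}_f-\mathcal{L}_g)\mathcal{M}_g$ together with $\|\mathcal{L}_f\|=\|\mathcal{M}_g\|=1$ gives
\[
\|\mathcal{E}_f-\mathcal{E}_g\|\le \|\mathcal{M}_f-\mathcal{M}_g\|+\|\mathcal{L}_f-\mathcal{L}_g\|\le 2\sup_\alpha\|\Theta(\alpha)-\Theta(\beta)\|,
\]
where the sup is over pairs with $|\alpha-\beta|\le\|f-g\|_\infty$.

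The key computation is the $2\times2$ block norm. A direct calculation shows that
\[
\bigl(\Theta(\alpha)-\Theta(\beta)\bigr)\bigl(\Theta(\alpha)-\Theta(\beta)\bigr)^*=\bigl(|\alpha-\beta|^2+|\rho_\alpha-\rho_\beta|^2\bigr) I,
\]
so $\|\Theta(\alpha)-\Theta(\beta)\|=\sqrt{|\alpha-\beta|^2+|\rho_\alpha-\rho_\beta|^2}$. It remains to control $|\rho_\alpha-\rho_\beta|$. Using the elementary estimate $(\sqrt{a}-\sqrt{b})^2\le|a-b|$ for $a,b\ge 0$, and the fact that $\bigl||\alpha|^2-|\beta|^2\bigr|\le(|\alpha|+|\beta|)|\alpha-\beta|<2|\alpha-\beta|$, one obtains
\[
|\rho_\alpha-\rho_\beta|^2\le\bigl||\alpha|^2-|\beta|^2\bigr|\le 2|\alpha-\beta|.
\]
Plugging $|\alpha-\beta|\le\|f-g\|_\infty<\epsilon^2/72$ into the bound yields
\[
\|\mathcal{E}_f-\mathcal{E}_g\|\le 2\sqrt{(\epsilon^2/72)^2+\epsilon^2/36},
\]
and one then checks by an elementary inequality that the right-hand side is bounded by $\epsilon$ in the regime where the bound is nontrivial (for large $\epsilon$ one uses the trivial bound $\|\mathcal{E}_f-\mathcal{E}_g\|\le 2$ from unitarity).

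The main obstacle is precisely the square root in $\rho_\alpha$: because $\rho$ is only H\"older-$1/2$ at $|\alpha|=1$, one cannot hope for a Lipschitz bound $\|\mathcal{E}_f-\mathcal{E}_g\|\lesssim\|f-g\|_\infty$; the perturbation of the CMV matrix can be as large as the square root of the Verblunsky perturbation, which is exactly why the hypothesis features $\|f-g\|_\infty<\epsilon^2/72$ rather than a first-order bound. The specific constant $72$ is then simply what the two-step reduction ($2\sqrt{\cdot}$ from the factorization, and the factor of $2$ from $|\,|\alpha|^2-|\beta|^2|\le 2|\alpha-\beta|$) produces after a little arithmetic.
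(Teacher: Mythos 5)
Your argument is correct: the factorization $\mathcal E=\mathcal L\mathcal M$ into block-diagonal unitaries built from the blocks $\Theta(\alpha)$, the telescoping estimate using unitarity, the identity $(\Theta(\alpha)-\Theta(\beta))(\Theta(\alpha)-\Theta(\beta))^*=\bigl(\vert\alpha-\beta\vert^2+(\rho_\alpha-\rho_\beta)^2\bigr)I$, and the H\"older-$1/2$ bound $(\rho_\alpha-\rho_\beta)^2\leq\bigl\vert\,\vert\alpha\vert^2-\vert\beta\vert^2\bigr\vert\leq 2\vert\alpha-\beta\vert$ all check out, and the closing arithmetic works: for $\epsilon\leq 2$ one gets $2\sqrt{(\epsilon^2/72)^2+\epsilon^2/36}\leq\epsilon\sqrt{37}/18<\epsilon$, while for $\epsilon>2$ the trivial bound $\norm\mathcal E_f-\mathcal E_g\norm\leq 2$ finishes the job. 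The paper, however, does not argue at all: its proof is a one-line citation of Equation (4.3.11) in Simon's OPUC book, which is precisely the standard estimate bounding $\norm\mathcal C(\{\alpha_n\})-\mathcal C(\{\tilde\alpha_n\})\norm$ by a constant times $\sup_n\vert\alpha_n-\tilde\alpha_n\vert^{1/2}$, together with the remark that the proof extends from half-line CMV matrices to extended ones. So your route is not genuinely different mathematics --- it is essentially the proof lying behind that citation, which itself proceeds through the $\mathcal L\mathcal M$ factorization and the square-root behavior of $\alpha\mapsto\rho_\alpha$ --- but your self-contained version buys something the paper only asserts: it makes completely transparent why the estimate is insensitive to passing to the two-sided setting, since $\mathcal L$ and $\mathcal M$ remain direct sums of the same $2\times 2$ blocks indexed over $\Z$, and it even yields a better constant than the hypothesis $\norm f-g\norm_\infty<\epsilon^2/72$ requires. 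Your concluding observation that the quadratic dependence on $\epsilon$ is forced by the mere H\"older-$1/2$ regularity of $\rho$ near $\partial\D$ is also exactly the right explanation for the shape of the statement.
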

\begin{proof}
This follows immediately from Equation (4.3.11) in \cite{Simon}.
Note that Equation (4.3.11) states a result for regular CMV matrices. The proof works perfectly well for our extended CMV matrices. 
\end{proof}
Let us first introduce the following elementary lemma:
\begin{lemma}\label{elementary}
Let $\mathcal E$ be a unitary operator. Let $z$ be a point on $\partial \D$, and let $d$ be the distance from $z$ to the spectrum $\Sigma(\mathcal E)$ of $\mathcal E$ (measured in absolute distance, rather than arclength). Then where $R_z$ is the resolvent of $\mathcal E$ at $z$, $\norm R_z\norm = 1/d$.
\end{lemma}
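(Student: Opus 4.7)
The plan is to invoke the spectral theorem for unitary operators. Since $\mathcal E$ is unitary, it is normal and its spectrum $\Sigma(\mathcal E)$ is a compact subset of $\partial\mathbb D$. The spectral theorem then gives a projection-valued measure $E$ on $\partial\mathbb D$, supported on $\Sigma(\mathcal E)$, with $\mathcal E=\int \lambda\,dE(\lambda)$, and more generally a continuous functional calculus $F\mapsto F(\mathcal E)$ from $C(\Sigma(\mathcal E))$ into the bounded operators which is an isometric $*$-homomorphism; in particular $\|F(\mathcal E)\|=\sup_{\lambda\in\Sigma(\mathcal E)}|F(\lambda)|$.

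Assuming $d>0$ (otherwise $z\in\Sigma(\mathcal E)$ and $R_z$ is not defined as a bounded operator), I would apply this to the function $F(\lambda)=1/(z-\lambda)$. Because $|z-\lambda|\ge d>0$ uniformly for $\lambda\in\Sigma(\mathcal E)$, $F$ is continuous on $\Sigma(\mathcal E)$, and the functional calculus element $F(\mathcal E)$ satisfies $(z-\mathcal E)F(\mathcal E)=F(\mathcal E)(z-\mathcal E)=I$, hence coincides with $R_z$. The isometry property of the functional calculus then yields
\[
\|R_z\|=\sup_{\lambda\in\Sigma(\mathcal E)}\frac{1}{|z-\lambda|}=\frac{1}{\inf_{\lambda\in\Sigma(\mathcal E)}|z-\lambda|}=\frac{1}{d},
\]
which is exactly the claim.

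There is really no substantive obstacle here; the only delicate point is the norm equality $\|F(\mathcal E)\|=\sup_{\Sigma(\mathcal E)}|F|$ in the continuous functional calculus, and this is a standard consequence of the fact that the $C^*$-algebra generated by a normal operator is isometrically $*$-isomorphic to $C(\Sigma(\mathcal E))$ via the Gelfand transform. If one prefers to avoid the functional calculus on one side of the inequality, the bound $\|R_z\|\ge 1/d$ is valid for any bounded operator (if $|z-z'|<1/\|R_z\|$ then $z'$ lies in the resolvent set via a Neumann series expansion of $(z'-\mathcal E)^{-1}$ around $z$, forcing $d\ge 1/\|R_z\|$); the reverse inequality $\|R_z\|\le 1/d$ is the step that genuinely uses normality and is where the spectral theorem enters.
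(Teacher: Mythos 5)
Your argument is correct, and it reaches the conclusion by a slightly different route than the paper. The paper splits the equality into two inequalities: for $\norm R_z\norm\leq 1/d$ it fixes an arbitrary normalized vector $\psi$, writes $\norm R_z\psi\norm^2=\int_{\Sigma(\mathcal E)}\vert w-z\vert^{-2}\,d\mu_\psi(w)$ using the scalar spectral measure of $\psi$, and bounds the integrand by $1/d^2$; for $\norm R_z\norm\geq 1/d$ it simply cites a textbook reference (Teschl, (2.88)). You instead invoke the isometric continuous functional calculus for the normal operator $\mathcal E$ and apply it to $F(\lambda)=(z-\lambda)^{-1}$, which yields the exact equality $\norm R_z\norm=\sup_{\lambda\in\Sigma(\mathcal E)}\vert z-\lambda\vert^{-1}=1/d$ in one stroke; your supplementary Neumann-series remark gives a self-contained proof of the lower bound that the paper only cites. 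The trade-off is that your route leans on the full Gelfand-isomorphism statement $\norm F(\mathcal E)\norm=\sup_{\Sigma(\mathcal E)}\vert F\vert$, whereas the paper's vectorwise spectral-measure computation uses only the most elementary form of the spectral theorem; on the other hand your version is more symmetric and avoids an external citation for the reverse inequality. One cosmetic point: you work with $(z-\mathcal E)^{-1}$ while the resolvent is usually $(\mathcal E-z)^{-1}$, but the norms coincide, and your explicit restriction to $d>0$ (so that $R_z$ exists as a bounded operator) matches how the lemma is actually used in the paper.
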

\begin{proof}
Let $\psi$ denote an arbitrary normalized vector. Let $\mu$ be the spectral measure of $\mathcal E$ corresponding to $\psi$. Since $\mu$ is supported by $\Sigma(\mathcal{E})$, by the Spectral Theorem,
\[ \norm R_z\psi\norm^2=\int_{\Sigma(\mathcal E)}\frac{1}{\vert w-z\vert^2}d\mu(w)\leq\frac{1}{d^2}. \]
Thus we have demonstrated that $\norm R_z\norm \leq 1/d$. The assertion that $\norm R_z\norm \geq 1/d$ is given in (2.88) in \cite{Teschl}.
\end{proof}
\begin{prop}\label{rotation}
Given two CMV operators $\mathcal E, \mathcal E'$, if $\norm \mathcal E-\mathcal E'\norm< \epsilon$, then if $z_0$ is in $\mathcal E$'s spectrum, then $\vert z_0-z_0'\vert<\epsilon$ for some $z_0'$ in $\mathcal E'$'s spectrum.
\end{prop}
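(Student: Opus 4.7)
The plan is to argue by contradiction, using Lemma \ref{elementary} to obtain a quantitative bound on the resolvent of $\mathcal E'$ and then running a standard Neumann-series perturbation expansion. Suppose the conclusion fails for some $z_0 \in \Sigma(\mathcal E)$: then every $z_0' \in \Sigma(\mathcal E')$ satisfies $|z_0 - z_0'| \geq \epsilon$, so the distance $d := \mathrm{dist}(z_0, \Sigma(\mathcal E'))$ is at least $\epsilon$. Since $\mathcal E$ is unitary, $z_0$ lies on $\partial \D$, so the hypothesis of Lemma \ref{elementary} applies to $\mathcal E'$ at the point $z_0$, giving $\norm R'_{z_0}\norm = 1/d \leq 1/\epsilon$, where $R'_{z_0} := (\mathcal E' - z_0)^{-1}$.

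Next I would estimate
\[ \norm R'_{z_0}(\mathcal E - \mathcal E')\norm \leq \norm R'_{z_0}\norm \cdot \norm \mathcal E - \mathcal E'\norm < \frac{1}{\epsilon} \cdot \epsilon = 1,\]
so $I + R'_{z_0}(\mathcal E - \mathcal E')$ is invertible by the Neumann series. Then factor
\[\mathcal E - z_0 = (\mathcal E' - z_0) + (\mathcal E - \mathcal E') = (\mathcal E' - z_0)\bigl[I + R'_{z_0}(\mathcal E - \mathcal E')\bigr].\]
Both factors are invertible (the first because $d > 0$, the second by the Neumann bound just obtained), so $\mathcal E - z_0$ is invertible, contradicting $z_0 \in \Sigma(\mathcal E)$.

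This is a standard spectral perturbation argument, so I anticipate no real obstacle; the only subtlety is being careful about the strict inequality $\norm \mathcal E - \mathcal E'\norm < \epsilon$ versus the non-strict bound $\norm R'_{z_0}\norm \leq 1/\epsilon$, which combine to give a product strictly less than $1$ as required. The key input that makes the argument work cleanly is precisely Lemma \ref{elementary}, which provides the sharp identity $\norm R'_{z_0}\norm = 1/d$ for unitary operators; without this one would only get an inequality in the wrong direction, and the perturbation bound would be weaker.
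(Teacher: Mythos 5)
Your proof is correct, and it takes a genuinely different route from the paper. You prove the contrapositive directly: assuming $d=\mathrm{dist}(z_0,\Sigma(\mathcal E'))\geq\epsilon$, you apply Lemma \ref{elementary} to $\mathcal E'$ at $z_0$ to get $\norm R'_{z_0}\norm=1/d\leq 1/\epsilon$, and then the factorization $\mathcal E-z_0=(\mathcal E'-z_0)\bigl[I+R'_{z_0}(\mathcal E-\mathcal E')\bigr]$ together with the Neumann series (the strict inequality $\norm\mathcal E-\mathcal E'\norm<\epsilon$ making the product strictly less than $1$) shows $z_0\notin\Sigma(\mathcal E)$, a contradiction. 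The paper instead argues via the Weyl criterion (Lemma 2.16 of \cite{Teschl}): it takes the point $z_0'\in\Sigma(\mathcal E')$ nearest to $z_0$, uses a Weyl sequence for $\mathcal E'$ at $z_0'$, transfers it to $\mathcal E$ to get $\norm(\mathcal E-z_0')f_n\norm/\norm f_n\norm\lesssim\epsilon$ along a subsequence, and then invokes Lemma \ref{elementary} for $\mathcal E$ to conclude that $z_0'$ is within $\epsilon$ of $\Sigma(\mathcal E)$ --- a more roundabout argument involving singular sequences, subsequence extraction, and boundary-of-resolvent considerations. Your version applies Lemma \ref{elementary} to $\mathcal E'$ rather than to $\mathcal E$, avoids Weyl sequences entirely, and is the standard resolvent-perturbation proof of the general fact that for normal (here unitary) operators the spectrum moves by at most the norm of the perturbation; it is cleaner and yields the stated conclusion more directly.
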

\begin{proof}
By Lemma 2.16 in \cite{Teschl}, if $z_0\in \partial \D$ and there exists a sequence 
$\{f_n\}\in \ell^2(\Z)$, $f_n\neq 0$ so
\[
\frac{\norm (\mathcal E-z_0)f_n\norm}{\norm f_n \norm}\to 0,n\to\infty,
\]
then $z_0$ is in the spectrum of $\mathcal E$. If $z_0$ is in the boundary of the resolvent set, then the converse also holds.

We apply the converse. So let $\mathcal E, \mathcal E'$ be two CMV operators such that \\ $\norm \mathcal E-\mathcal E'\norm<\epsilon$ for some $\epsilon>0$. Let $z_0$ be in $\Sigma(\mathcal E)$, and let $z_0'$ be a point in $\Sigma(\mathcal E')$ whose distance from $z_0$ is minimum ($z_0'$ exists since the spectrum is closed). If $z_0=z_0'$, we're done. If not, it is clear that $z_0'$ is on the boundary of the resolvent set of $\mathcal E'$. Hence we know that there exists a sequence $\{f_n\}$ for which
\[ \frac{\norm (\mathcal E'-z_0') f_n\norm}{\norm f_n\norm}\to 0.\]
But then we have
\begin{align*}
\frac{\norm (\mathcal E-z_0') f_n\norm}{\norm f_n\norm}\leq & \frac{\norm (\mathcal E-\mathcal E') f_n\norm+\norm (\mathcal E'-z_0')f_n\norm }{\norm f_n\norm}.\\
\end{align*}
Thus given any $\delta>0$,
\[e_n=\frac{\norm (\mathcal E-z_0') f_n\norm}{\norm f_n\norm}\leq \epsilon+\delta,\]
for sufficiently large $n$. We know that $e_n$ is a bounded sequence, and thus must have a convergent subsequence. That subsequence must converge to a value not greater than $\epsilon$. Let $\{e_{k(n)}\}_{n=-\infty}^\infty$ be that subsequence, and let it converge to $\gamma\leq \epsilon$ as $n\to\infty$. \\

But then we know that 
\begin{align*}
1=&\frac{\norm(\mathcal E-z_0')^{-1}(\mathcal E-z_0')f_{k(n)}\norm}{\norm f_{k(n)}\norm},\\
\leq &\norm(\mathcal E-z_0')^{-1}\norm \cdot\frac{\norm(\mathcal E-z_0')f_{k(n)}\norm}{\norm f_{k(n)}\norm},\\
\end{align*}
this then implies that
\begin{align*}
\frac{1}{\gamma}\leq&\norm(\mathcal E-z_0')^{-1}\norm=\norm \text{Resolvent function of }\mathcal E \text{ at }z_0'\norm.
\end{align*}
Thus the distance of $z_0'$ from the spectrum of $\mathcal E$ is at most $\gamma\leq \epsilon$, by Lemma \ref{elementary}.
\end{proof}
\begin{lemma}\label{emptyinterior}
Let $\mathcal E_f$ denote the CMV operator induced by the sampling function $f$. Then $\mathcal R=\{f\in C(\Omega,\D):\Sigma(\mathcal E_f)$ has empty interior $\}$ is a $G_{\delta}$ set.
\end{lemma}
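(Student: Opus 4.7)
The plan is to write $\mathcal R$ as a countable intersection of open sets. Let $\{z_i\}_{i=1}^\infty$ be a countable dense subset of $\partial \D$, and for each $i,m \in \Z_+$ set $A_{i,m} = \{z \in \partial \D : |z - z_i| < 1/m\}$. The family $\{A_{i,m}\}$ is a basis for the subspace topology on $\partial \D$, so the closed set $\Sigma(\mathcal E_f) \subseteq \partial \D$ has empty interior in $\partial\D$ if and only if, for every $i,m$, the arc $A_{i,m}$ is not contained in $\Sigma(\mathcal E_f)$. Setting
\[
U_{i,m} = \{f \in C(\Omega, \D) : \text{there exists } z' \in A_{i,m} \text{ with } z' \notin \Sigma(\mathcal E_f)\},
\]
we then have $\mathcal R = \bigcap_{i,m} U_{i,m}$, so it suffices to show each $U_{i,m}$ is open.

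Fix $i,m$ and let $f_0 \in U_{i,m}$, witnessed by some $z' \in A_{i,m}$ lying outside the closed set $\Sigma(\mathcal E_{f_0})$. Then $d := \mathrm{dist}(z', \Sigma(\mathcal E_{f_0})) > 0$. Set $\epsilon = d/2$, and suppose $g \in C(\Omega, \D)$ satisfies $\| f_0 - g \|_\infty < \epsilon^2/72$. Proposition \ref{72} gives $\| \mathcal E_{f_0} - \mathcal E_g \| \leq \epsilon$. Applying Proposition \ref{rotation} with the roles of $\mathcal E$ and $\mathcal E'$ swapped shows that every point of $\Sigma(\mathcal E_g)$ lies within $\epsilon$ of some point of $\Sigma(\mathcal E_{f_0})$. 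By contraposition, the witness $z'$, whose distance $d > \epsilon$ from $\Sigma(\mathcal E_{f_0})$ is too large, cannot belong to $\Sigma(\mathcal E_g)$. Hence $g \in U_{i,m}$, and $U_{i,m}$ is open.

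The proof is essentially administrative and rests on three inputs already available in this section: a countable arc basis detects empty interior in $\partial \D$; Proposition \ref{72} provides continuity of $f \mapsto \mathcal E_f$ in operator norm; and Proposition \ref{rotation} gives one-sided Hausdorff continuity of the spectrum in operator norm. I do not anticipate a substantive obstacle. The only subtlety is the direction in which Proposition \ref{rotation} is invoked: we need "points of $\Sigma(\mathcal E_g)$ are close to $\Sigma(\mathcal E_{f_0})$" rather than the reverse, so that the \emph{fixed} witness $z'$ (chosen using $f_0$) remains in the resolvent set of $\mathcal E_g$ for every $g$ sufficiently close to $f_0$.
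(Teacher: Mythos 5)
Your proof is correct and is essentially the paper's own argument: the paper likewise writes $\mathcal R$ as a countable intersection, over arcs with rational endpoints, of the open sets of those $f$ whose resolvent set meets the given arc, and proves openness of each such set from Proposition \ref{72} together with the stability of a fixed resolvent point under small norm perturbations of the operator. The only cosmetic difference is that you obtain this stability by citing Proposition \ref{rotation} with the roles of the two operators swapped, whereas the paper argues directly via Lemma \ref{elementary} and a Neumann series; both work, and the mismatch between the $\leq\epsilon$ of Proposition \ref{72} and the strict $<\epsilon$ hypothesis of Proposition \ref{rotation} is harmlessly absorbed by your factor-of-two slack $d=2\epsilon$.
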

\begin{proof}
This proof is similar in spirit to Lemma 1.1 in \cite{Avron-Simon}. For $a,b$ real, we have $e^{ia},e^{ib}\in\partial \D$ and let us define
\begin{align*}
S_{(a,b)}=\{ &f\vert f\in C(\Omega, \D), \text{ the resolvent set of }\mathcal E_f\text{ intersects the open }\\
&\text{counterclockwise  arc between $e^{ia}$ and $e^{ib}$}\}.
\end{align*}
Then
\[\mathcal R=\bigcap_{a,b\in 2\pi\Q} S_{(a,b)},\]
so it suffices to show that $S_{(a,b)}$ is open for every choice of $a,b\in 2\pi\Q$. Let $f\in S_{(a,b)}$. Let $B$ be the open counterclockwise arc from $e^{ia}$ and $e^{ib}$ Since the resolvent set of an operator is always open, there must exist $c\in\partial\D,\delta\in\mathbb \R_+ $ so that the open counterclockwise arc $A$ centered at $c$ with endpoints each $\delta$ away from $c$ in absolute distance satisfies
\[A\subseteq B\cap\text{ resolvent set of } \mathcal E_f.\]
For every $g\in C(\Omega, \D)$ with  $\norm f-g\norm<\delta^2/72$, we assert that $c$ is in the resolvent set of $\mathcal E_g$. This is because we have $\norm \mathcal E_f-c\norm^{-1}\leq\delta^{-1}$ by Lemma \ref{elementary}, so that 
\[1+(\mathcal E_g-\mathcal E_f)(\mathcal E_f-c)^{-1},\]
is invertible if $\norm f-g\norm <\delta^2/72$, since Proposition \ref{72} would then imply that \[\norm \mathcal E_f-\mathcal E_g\norm<\delta.\]
\end{proof}
{}
We will now  say a few words first about the spectrum when the sampling function $f$ (and hence $\alpha$) is periodic.  This case is very extensively studied in Chapter 11 of \cite{Simon}, but we will state a few results that are of particular relevance to us here. 

{}
Given a $p$-periodic sequence of Verblunsky coefficients with $p$ even, we introduce the discriminant function, $\Delta(z)$, which is defined in (11.1.2) of \cite{Simon} and whose properties are given by Theorem 11.1.1 of \cite{Simon}. As a summary, on $\partial \D$, $\Delta$ is continuous and takes on real values. According to Theorem  11.1.2 of \cite{Simon} the essential support of the absolutely continuous spectrum corresponding to the periodic Verblunsky coefficients is given by 
\[\{e^{i\theta} \vert -2\leq\Delta(e^{i\theta})\leq 2\}.\]
This set comprises of $p$ closed bands on $\partial\D$ on each of which $\Delta$ as a function of $\theta$ is either strictly increasing or strictly decreasing (Figure 11.2 of \cite{Simon} provides a typical picture of the graph of $\Delta$). Two adjacent bands intersect not at all or at just one point. If they do not intersect, we say that the gap between two adjacent bands is \em open.\em 

\begin{lemma}\label{opengaps}For a dense open set of periodic samping functions $f\in P$, all the gaps of $\Sigma( \mathcal E_f)$ are open.
\end{lemma}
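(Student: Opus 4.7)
The plan is to localize to each finite-period class $P_k$ and reduce the lemma to a finite-dimensional genericity question in $P_k$, then assemble the local statements into a dense open subset of $P$.

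For fixed $k$, I would identify $P_k$ with the polydisk $\D^{p_k}$ by evaluating $f \in P_k$ along the $T$-orbit of a fixed basepoint $\omega \in \Omega$. By Chapter 11.1 of \cite{Simon}, the $p_k$-periodic discriminant $\Delta_{p_k}(z)$ is a Laurent polynomial whose coefficients depend polynomially on the Verblunsky coefficients and their conjugates, and the spectrum equals $\{e^{i\theta}: |\Delta_{p_k}(e^{i\theta})| \leq 2\}$. A gap between adjacent bands is closed precisely when $\Delta_{p_k}(e^{i\theta_0}) = \pm 2$ and $(d/d\theta)\Delta_{p_k}(e^{i\theta})|_{\theta_0} = 0$ simultaneously at some $\theta_0$, i.e., $\Delta_{p_k}$ touches the level $\pm 2$ tangentially.

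The key observation is that the $\theta$-resultant of $\Delta_{p_k}(e^{i\theta}) \mp 2$ with its $\theta$-derivative is a real-analytic function on $P_k \cong \D^{p_k}$ that vanishes exactly when some closed gap is present. This resultant is not identically zero, as a small generic $p_k$-periodic perturbation of a constant Verblunsky sequence already has all gaps open. Thus the locus of closed gaps is a proper real-analytic subvariety of $P_k$, and its complement $O_k := \{f \in P_k : \Sigma(\mathcal E_f) \text{ has all gaps open}\}$ is open and dense in $P_k$. Setting $O := \bigcup_k O_k$, density in $P$ follows immediately from $P = \bigcup_k P_k$.

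For openness of $O$ in $P$ with respect to $\norm\cdot\norm_\infty$, I would combine Propositions \ref{72} and \ref{rotation}: if $f \in O$ has minimum spectral gap width $d > 0$, any $g \in P$ sufficiently close to $f$ in $\norm\cdot\norm_\infty$ has $\Sigma(\mathcal E_g)$ contained in a $(d/3)$-neighborhood of $\Sigma(\mathcal E_f)$, so all of $f$'s gaps persist. The main obstacle I anticipate is ruling out \emph{new} closed gaps when $g$'s natural period strictly exceeds $f$'s: within each tube around a band of $f$, the $g$-spectrum could in principle pinch into touching sub-bands, closing some of $g$'s natural gaps. Handling this --- likely by shrinking the $\norm\cdot\norm_\infty$-ball around $f$ enough that $g$ must share $f$'s period class, or by a uniform genericity argument across all $P_{k'} \supset P_k$ intersected with the ball --- will be the technical heart of the openness half of the lemma.
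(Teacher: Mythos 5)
Your reduction to the finite-dimensional parameter space $P_k\cong\D^{p_k}$ (evaluation along the orbit of a basepoint, which is exactly the coset picture of Proposition \ref{cantorsubgroups}) matches the paper's strategy, but the core of your argument has a genuine gap. The paper does not re-prove finite-dimensional genericity of open gaps: it simply invokes Theorem 11.13.1 of \cite{Simon}, which states that the set of $p$-tuples in $\D^p$ whose periodic extension has at least one closed gap is a closed set of measure zero, and then transfers this to sampling functions by perturbing the values of $f$ on the $p$ cosets of $\Omega_k$. You instead try to prove that genericity from scratch via a resultant of $\Delta\mp2$ and its $\theta$-derivative, and the decisive step --- that this resultant is not identically zero on $\D^{p_k}$ --- is justified only by the remark that ``a small generic perturbation of a constant Verblunsky sequence already has all gaps open.'' That is circular: the assertion that generic small perturbations open all gaps is precisely the statement being proved, and exhibiting even one period-$p$ sequence with all $p$ gaps open (for every $p$) is the nontrivial content of Simon's Theorem 11.13.1; a constant sequence itself has (many) closed gaps, so it is not a witness. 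Without either such an explicit witness or the citation, the ``proper real-analytic subvariety'' claim is unsupported. A secondary inaccuracy: the resultant vanishes whenever $\Delta\mp2$ has a multiple root, which contains but need not coincide with the closed-gap locus; the inclusion you actually need (resultant nonzero $\Rightarrow$ all gaps open) is fine, but the ``vanishes exactly when'' phrasing should be dropped.

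On the openness half: you are right to flag the period-mismatch problem (a $g$ near $f$ may have strictly larger period, and ``all gaps open'' is a statement about the band structure for that period, so closeness of the spectra via Propositions \ref{72} and \ref{rotation} does not control $g$'s own new gaps), but you leave it unresolved. Be aware that the paper's proof does not resolve it either: its argument only establishes density in $P$, with openness holding within each $\D^{p_k}$ because the bad set there is closed (again Theorem 11.13.1), and indeed only the density statement (relative to a prescribed period $p_{N+k}$) is used later in the proof of Theorem \ref{t.cantor}. So the honest fix for your write-up is to replace the resultant argument by the citation to Theorem 11.13.1 of \cite{Simon} (or supply a genuine witness sequence with all gaps open for each period), state openness only within each $P_k$, and note that this is all that the application requires.
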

\begin{proof}
Theorem 11.13.1 in \cite{Simon} tells us that a the set of sequences in $\D^p$ which represent the first $p$ terms of a $p$-periodic sequence corresponding to a spectrum with at least one gap closed is a closed set of measure zero. In particular, if we have a $p$-periodic Verblunsky sequence whose first $p$ terms is $\alpha_0,\ldots \alpha_{p-1}$, and that sequence corresponds to a spectrum with at least one closed gap, we can find another $p$-periodic Verblunsky coefficient sequence starting with $\tilde \alpha_0,\ldots \tilde\alpha_{p-1}$ with $\max_{1\leq j\leq p-1}\vert \alpha_j-\tilde\alpha_j\vert$ as small as we like,  which corresponds to a spectrum with all gaps open instead.\\

Take an $\epsilon>0$. Now consider a $p$-periodic sampling function $f\in P$ that corresponds to a spectrum with at least one closed gap. By the $\omega$-independence of the spectrum, it suffices to choose an $\omega_0\in \Omega$, and let $\alpha_j=f(T^j\omega_0)$ for $0\leq j\leq p-1$. We can find $\tilde \alpha_0,\ldots \tilde\alpha_{p-1}$ such that $\vert \alpha_j-\tilde\alpha_j\vert<\epsilon$ so that a $p$-periodic sequence starting with $\tilde \alpha_0,\ldots \tilde\alpha_{p-1}$ corresponds to a spectrum with all gaps open. \\

By Proposition \ref{cantorsubgroups}, we know that $p$-periodic functions are defined on $\Omega/\Omega_k$ for some $k$, where $\Omega_k$ is an index $p$ subgroup of $\Omega$. In particular, $\alpha_0,\ldots \alpha_{p-1}$ are the images of $f$ on the $p$ cosets of $\Omega_k$ in $\Omega$. We define $\tilde f$ by replacing the images of those $p$ cosets with $\tilde\alpha_0,\ldots \tilde\alpha_{p-1}$. In this case, $\tilde f$ is clearly a continuous $p$-periodic sampling function on $\Omega$, for which $\norm f-\tilde f\norm <\epsilon$ and $\tilde f$ corresponds to a spectrum with all gaps open.
\end{proof}
\begin{lemma}\label{smallbands}
Let $f\in P$ have period $p$. Then the measure of each band of $\Sigma(\mathcal E_f)$ is at most $2\pi/p$.
\end{lemma}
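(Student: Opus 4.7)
The plan is to exploit the Floquet--Bloch decomposition of the $p$-periodic CMV operator, along the lines of Chapter 11 of \cite{Simon}. Since $f$ has period $p$, the discriminant $\Delta(z)$ restricts to $\partial\D$ as a real trigonometric polynomial of degree $p/2$ in $\theta$ (with $z=e^{i\theta}$), and on each band $B$ it varies strictly monotonically from $-2$ to $+2$ (or vice versa). I parametrize $B$ by a continuous monotone $\phi\colon B\to[0,\pi]$ via $\Delta(e^{i\theta})=2\cos\phi(\theta)$.

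The main tool I would use is the integrated density of states $k(\theta)$ (equivalently, the rotation number of the Szego cocycle), normalized so that $k(2\pi)-k(0)=1$. Two facts, both standard for the periodic setting of Chapter 11 of \cite{Simon}, are needed. First, $k$ is constant on the gaps and increases by exactly $1/p$ across each band, so $\int_B dk = 1/p$. Second, the density satisfies $dk/d\theta\geq 1/(2\pi)$ on the spectrum --- this is the OPUC analogue of the classical Deift--Simon lower bound on the density of states for Jacobi matrices. Granting these,
\[
\frac{|B|}{2\pi} \;=\; \frac{1}{2\pi}\int_B d\theta \;\leq\; \int_B dk \;=\; \frac{1}{p},
\]
which is the claim.

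The step I expect to require the most work is the lower bound $dk/d\theta\geq 1/(2\pi)$. Via the band parametrization above, it is equivalent to $\phi'(\theta)\geq p/2$, or unwound,
\[
\left(\tfrac{d}{d\theta}\Delta(e^{i\theta})\right)^{\!2}\;\geq\;\left(\tfrac{p}{2}\right)^{\!2}\!\left(4-\Delta(e^{i\theta})^2\right)\qquad\text{on }B.
\]
My plan for proving this is to use that the rescaled period transfer matrix $M_p(e^{i\theta})e^{-ip\theta/2}$ lies in $\mathrm{SU}(1,1)$ on $\partial\D$ and to exploit the monotonicity of the associated cocycle rotation number in $\theta$. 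Alternatively one may cite the inequality directly from the general ergodic-spectral theory in \cite{Simon}. In either form, this is where the degree $p/2$ of $\Delta$ enters essentially: it forces the oscillation of $\Delta$ across any band to be rapid enough that the band cannot be longer than $2\pi/p$.
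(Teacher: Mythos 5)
Your argument is essentially the paper's own: the paper also combines the fact that each band carries equilibrium (density of states) measure exactly $1/p$ (via (11.1.18) and (11.1.21) in \cite{Simon}) with the lower bound that this measure dominates normalized Lebesgue measure on the spectrum (Theorem 10.11.21 in \cite{Simon}, the OPUC analogue of the Deift--Simon bound you invoke), which is precisely your estimate $|B|/2\pi\leq\int_B dk=1/p$. Your fallback of citing the density bound from \cite{Simon} rather than re-deriving it via the $\mathrm{SU}(1,1)$ rotation-number argument is exactly what the paper does, so the proposal is correct and matches the paper's route.
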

\begin{proof}
This follows from Theorem 11.1.3 in \cite{Simon}, in particular (11.1.18) and (11.1.21). The first part of that Theorem 11.1.3 tells us that the essential spectrum of the spectral measure is equal to the support of the equilbrium measure $d\nu$ on the bands, and (11.1.18) and (11.1.21) together state that the equilbrium measure of any single band is $1/p$. 

Now consider the estimate of the equilbrium measure given in Theorem 10.11.21 of \cite{Simon}. Periodic Verblunsky coefficients are a special case of stochastic Verblunsky coefficients, so this theorem applies. We have then that we can write the equilbrium measure as $d\nu(\theta)=g(\theta) \frac{d\theta}{2\pi}$, with $g(\theta)>1$ for almost every $\theta$ in the support. But this then implies that the normalized Lebesgue measure of any band is at most $1/p$. Hence the Lebesgue measure of any band is at most $2\pi/p$.
\end{proof}

\begin{proof}[Proof of Theorem \ref{t.cantor}]
We need to show that the subset $\mathcal R$ of $C(\Omega,\D)$ defined in Lemma \ref{emptyinterior} is dense. Since $P$ is dense in $C(\Omega, \mathbb D)$, we need only show that given $f\in P$ and $\epsilon>0$, there is an $\tilde f$ such that $\norm f-\tilde f\norm<\epsilon$ and $\Sigma(\mathcal E_{\tilde f})$ is nowhere dense.\\

Let $P$ be the set of periodic sampling functions, and $P_k$ be the set of periodic sampling functions of period $p_k$, as described in Proposition \ref{cantorsubgroups}. Let $f\in P$ and $\epsilon>0$ be given. We can write $f=\sum_{j=1}^N a_j W_j, W_j\in P_j$. We also construct $s_0=\sum_{i=0}^Na_i^{(0)}W_i$ so that $\norm s_0\norm<\epsilon^2/72$ and $f_0=f+s_0$ is $p_N$-periodic and the corresponding spectrum has all $p_N$ gaps open. This is possible due to Lemma \ref{opengaps}, since we know that having gaps open is generic behavior.\\

Suppose that we have chosen $s_0,s_1,\ldots s_{k-1}$ and $f_0, f_1,\ldots, f_{k-1}$. Let $A_{k-1}$ be the minimal gap size of $\Sigma(f_{k-1})$ (we define gap size by absolute distance, rather than distance along the circular arc) and define $B_k=\min\{A_0,A_1,\ldots, A_{k-1}\}$. Applying Lemma \ref{opengaps}, we pick $s_k=\sum_{i=0}^{N+k}a_i^{(k)}W_i$ so that
\begin{align}
\norm s_k\norm<&\left(\frac{\epsilon}{2^k}\right)^2/72,\\
\norm s_k\norm<&\frac{1}{ 2^k}\frac{B_k^2}{3^2\cdot 72}\label{11},\\
f_k=&f+\sum_{j=0}^k s_j\text{ has all the gaps of its corresponding spectrum open. }
\end{align}
So the limit of $f_k$ exists, and we let $\tilde f=\lim_{k\to\infty} f_k$. By construction, we have $\norm\tilde f-f\norm <\epsilon^2/54$. We claim that $\Sigma(\mathcal E_{\tilde f})$ is nowhere dense. Equivalently, its complement is dense.\\

Given $z\in\Sigma(\mathcal E_{\tilde f})$ and $\tilde \epsilon>0$, we can pick $k$ large enough so
\begin{align}
\norm \tilde f-f_k\norm<&\left(\frac{\tilde \epsilon}{3}\right)^2/72\label{13},\\
 \frac{2\pi}{p_{N+k}}<&\frac{\tilde \epsilon}{3},\label{14}\\
\frac{\epsilon}{2^{k}}<&\frac{\tilde \epsilon}{3}\label{15}.
\end{align}
By (\ref{13}) and Proposition \ref{72}, we know that $\norm\mathcal E_{\tilde f}-\mathcal E_{f_k}\norm<\tilde \epsilon/3$, and so by Proposition \ref{rotation} there exists $z'\in \Sigma(\mathcal E_{f_k})$ such that $\vert z-z'\vert<\tilde\epsilon/3$. Moreover, by Lemma \ref{smallbands} and (\ref{14}) we can find $\tilde z$ in a gap of $\Sigma(\mathcal E_{f_k})$ such that $\vert z'-\tilde z\vert<\tilde\epsilon/3$. Write this gap of $\Sigma(f_k)$ that contains $\tilde z$ as $I_\delta(a)$, which refers to an open interval on the unit circle centered at the point $a\in\partial\D$, whose endpoints are each $\delta$ away from $a$ (this distance calculated in terms of absolute value, rather than arclength). By the triangle inequality, we have $2\delta>B_{k+1}$. We then know that, by (\ref{11}),
\[\norm \tilde f-f_k\norm =\left\vert\left\vert\sum_{j=k+1}^\infty s_j\right\vert\right\vert<\frac{B_{k+1}^2}{3^2\cdot 72}\left(\frac{1}{2^{k+1}}+\frac{1}{2^{k+2}}+\ldots\right)< \left(\frac{2\delta}{3}\right)^2\frac{1}{72}.\]
This implies $\norm \mathcal E_{\tilde f}-\mathcal E_{f_k}\norm<2\delta/3$.
So we have, by Proposition \ref{rotation} and the triangle inequality that $I_{\frac{\delta}{3}}(a)\cap\Sigma(\mathcal E_{\tilde f})=\emptyset$.\\
We claim that there exists $\delta'\in [\delta/3, \delta)$ such that $I_{\delta'}(a)\cap\Sigma( \mathcal E_{\tilde f})=\emptyset$ and $\vert\delta-\delta'\vert<\epsilon/2^{k}$. The above demonstrates that we may arrange for $\delta'\geq \delta/3$. Suppose that it is impossible to find such a $\delta'$ so that  $\vert\delta-\delta'\vert<\epsilon/2^{k}$. Then there will be an $x$ such that  $x \in\Sigma(\mathcal E_{\tilde f})$ and $I_{\frac{\epsilon}{2^{k}}}(x)\subseteq I_{\delta}(a)$. This contradicts the fact that $I_{\frac{\epsilon}{2^{k}}}(x)\cap \Sigma(\mathcal E_{f_k})\neq\emptyset$, since we had

\[\norm \tilde f-f_k\norm=\left\vert\left\vert \sum_{j={k+1}}^\infty s_j\right\vert\right\vert<\left(\frac{\epsilon}{2^k}\right)^2/72,\]
and so $\norm\mathcal E_{\tilde f}-\mathcal E_{f_k}\norm<\epsilon/2^k$.\\

Thus we can choose $\hat z$ in the gap of $\Sigma(\mathcal E_{\tilde f})$ that contains $I_{\delta'}(a)$ and so that $\vert\hat z-\tilde z\vert< \epsilon/2^{k}<\tilde \epsilon/3$. The second inequality follows from (\ref{15}). Since we also have $\vert\tilde z-z'\vert<\tilde \epsilon/3$ and $\vert z'-z\vert<\tilde\epsilon/3$, it follows that $\vert \hat z-z\vert<\tilde\epsilon$. This shows that $\partial\D\setminus\Sigma(\tilde f)$ is dense and completes the proof.
\end{proof}
\end{section}

\begin{section}{AC Spectrum}
{}
We now  prove Theorem \ref{t.ac}. The Floquet Theory results required in the proof are described in fuller detail in \cite{Simon}, Sections 11.1 and 11.2.

{}
Let $q$ be an even integer so that the Verblunsky coefficients are $q$-periodic. That is, if the period $p$ of the Verblunsky coefficients is odd, we take $q$ to be an even multiple. We shall consider the extended CMV matrix $\mathcal E$ as an operator acting on $\ell^{\infty}(\Z)$. The operator $\mathcal E$ is bounded on $\ell^\infty(\Z)$, since every row has only four nonzero terms that are bounded as long as $\alpha$ is bounded away from the boundary of the disk. Let $M$ be the shift operator $(Mu)_m=u_{m+q}$. We then have 
\[M\mathcal E=\mathcal EM.\]
Let $\Theta\in [0,2\pi)$ and define
\[ X_{\Theta}=\{ u\in \ell^\infty(\Z)\vert Mu=e^{i\Theta} u\}.\]
Note that $\mathcal E$ takes $X_\Theta$ to itself. Also, $X_\Theta$ is clearly $q$-dimensional, since it is determined by any $q$ consecutive coordinates of $u$. We then define $\mathcal E_q(\Theta)$ as the restriction of $\mathcal E$ to $X_\Theta$. The matrix $\mathcal E_q(\Theta)$ is described explicitly in Figure 11.3, (11.2.6), and (11.2.7) of \cite{Simon}. Given this machinery, let us now summarize Floquet Theory for OPUC.

\begin{prop}\label{Floquet}\
\begin{enumerate}[(a)]
\item We have $z\in \Sigma(\mathcal E)$ if and only if $zu=\mathcal E u$ for some solution $\{u(n)\}$ obeying $Mu=e^{i\Theta} u$ for some $e^{i\Theta}\in \partial\D$. In this case, $\tilde u=\left<u(n)\right>_{n=0}^{q-1}$ is an eigenvector of $\mathcal E_q(\Theta)$ corresponding to the eigenvalue $z$.
\item We have 
\[\Sigma(\mathcal E)=\bigcup_\Theta\Sigma(\mathcal E_q(\Theta)).\]
\item For $\Theta\neq 0,\pi$, we have 
\[ \det(z-\mathcal E_q(\Theta))=\left( \prod_{j=0}^{q-1}\rho_j\right) [z^{q/2} [\Delta(z)-(e^{i\Theta}+e^{-i\Theta})]],\]
where $\Delta(z)$ is the discriminant function defined in (11.1.2) of \cite{Simon}. Also, we know that 
\[\Sigma(\mathcal E)=\{z:\vert \Delta(z)\vert \leq 2\}.\]
The set $\Sigma(\mathcal E)$ is made of $q$ bands such that on each band, $\Sigma(\mathcal E)$ is either strictly
increasing or strictly decreasing.
\item If $z$ is on the boundary of some band, then $\Delta(z)=\pm 2$.
\end{enumerate}
\end{prop}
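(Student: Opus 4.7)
The plan is to package this as a direct Bloch--Floquet decomposition argument, leaning heavily on the OPUC Floquet theory developed in Chapter 11 of \cite{Simon}. The key observation is that $M\mathcal E=\mathcal E M$ and that $M$ has pure continuous spectrum equal to $\partial \D$, so one obtains a direct integral decomposition
\[
\mathcal E\cong \int^\oplus_{[0,2\pi)} \mathcal E_q(\Theta)\,\frac{d\Theta}{2\pi},
\]
where the fiber $\mathcal E_q(\Theta)$ is precisely the $q\times q$ matrix written out in (11.2.6)--(11.2.7) and Figure 11.3 of \cite{Simon}. Parts (a) and (b) then follow from standard direct integral theory: a point $z$ lies in $\Sigma(\mathcal E)$ if and only if it lies in $\Sigma(\mathcal E_q(\Theta))$ for some (in fact for a positive measure set of) $\Theta$, and solutions $u\in X_\Theta$ are exactly the sequences that restrict to eigenvectors of $\mathcal E_q(\Theta)$.

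For part (c), I would compute $\det(z-\mathcal E_q(\Theta))$ directly from the explicit block form of $\mathcal E_q(\Theta)$. Expanding along the top and bottom rows (where the entries involve $e^{i\Theta}$ and $e^{-i\Theta}$ via the quasi-periodic boundary condition) separates the $\Theta$-dependence into an additive term $-(e^{i\Theta}+e^{-i\Theta})$, while the $\Theta$-independent part is, up to the overall factor $\prod_{j=0}^{q-1}\rho_j$, exactly $z^{q/2}\Delta(z)$ by the definition of the discriminant in (11.1.2) of \cite{Simon}. This is essentially the content of Theorem 11.2.1 of \cite{Simon}, whose derivation transfers verbatim to our setting once one notes that the $q$-step transfer matrix defined in Section 2 has trace proportional to $\Delta(z)$. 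The identity $\Sigma(\mathcal E)=\{z:|\Delta(z)|\leq 2\}$ is then immediate from (b) and part (c), since $e^{i\Theta}+e^{-i\Theta}\in[-2,2]$ ranges over all of $[-2,2]$ as $\Theta$ varies. The band structure and monotonicity assertions are Theorem 11.1.1 of \cite{Simon}.

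For part (d), the argument is that $z$ lies on the boundary of a band precisely when, as $\Theta$ crosses $0$ or $\pi$, two roots of $\det(z-\mathcal E_q(\Theta))=0$ coalesce. From the formula in (c), this double-root condition forces $\Delta(z)=\pm 2$, which is the content of the band-edge portion of Theorem 11.1.1 in \cite{Simon}.

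I expect the main obstacle, if any, to be purely bookkeeping: verifying that Simon's formulae, which are developed for the standard (half-line or whole-line) CMV matrix, transfer cleanly to the extended CMV matrix $\mathcal E$ we use here. Since $\mathcal E$ differs from the standard two-sided CMV only through the indexing convention implicit in (\ref{CMVmatrix}) and since the Floquet decomposition depends only on $q$-periodicity of the Verblunsky coefficients and the translation invariance $M\mathcal E=\mathcal E M$, I anticipate that each invocation of a result from Chapter 11 of \cite{Simon} carries over without modification, with the only care being the placement of the factor $z^{q/2}$ in the determinant formula (which comes from the specific pairing of rows corresponding to even and odd indices in the CMV block structure).
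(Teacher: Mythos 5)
Your proposal is correct and takes essentially the same route as the paper, which offers no independent proof of this proposition: it is stated as a summary of the OPUC Floquet theory in Sections 11.1 and 11.2 of Simon's book (the commutation $M\mathcal E=\mathcal E M$, the fibers $\mathcal E_q(\Theta)$ from (11.2.6)--(11.2.7) and Figure 11.3, the determinant formula of Theorem 11.2.1, and the band structure of Theorems 11.1.1--11.1.2), exactly the ingredients you invoke, with the direct-integral decomposition you describe being precisely the $\mathcal F$-transform machinery the paper sets up around the proposition. The only quibble is your parenthetical claim that $z\in\Sigma(\mathcal E)$ lies in $\Sigma(\mathcal E_q(\Theta))$ for a positive-measure set of $\Theta$: at a band edge this set is just $\{0\}$ or $\{\pi\}$, but nothing in your argument depends on that aside.
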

{}
We know that there is no point spectrum from Corollary \ref{c.gordon}. The discriminant $\Delta(z)$ is described in Section 11.1, and is explicitly defined in (11.1.2) in \cite{Simon}. Furthermore, again according to Theorem 11.1.2 of \cite{Simon} the spectrum of $\mathcal E$ consists precisely of the points $z$ on $\partial\D$  for which $\vert \Delta(z)\vert\leq 2$. 

{}
We shall first put the Fourier transform into a $\mod q$ setting, in a similar spirit as that in Section 5.3 of \cite{Simon2}. We define 
\[\mathcal F:\ell^2(\Z)\to L^2\left( \partial\D, \frac{d\Theta}{2\pi};\mathbb C^q\right),\]
the $L^2$ functions with values in $C^q$. The expression $\dfrac{d\Theta}{2\pi}$ refers to normalized Lebesgue measure on the unit circle. Thus given $n=0,\ldots ,q-1$:
\[ (\mathcal Fu)_n(\Theta)=\sum_{l=-\infty}^\infty u_{n+lq}e^{-il\Theta}.\]
We define this initially for $u\in \ell^1$ and then extend by using
\[\int_{\partial \D} \norm \mathcal F u_\cdot (\Theta)\norm^2 \frac{d\Theta}{2\pi} =\sum_n \vert u_n\vert^2,\]
since $\{ e^{il\Theta}\}_{l=-\infty}^\infty $ is a basis for $L^2(\partial \D, \dfrac{d\Theta}{2\pi})$. The inverse 
\[\mathcal F^{-1}: L^2\left(\partial \D, \frac{d\Theta}{2\pi}; \mathbb C^q\right)\to \ell^2(\mathbb Z),\]
is given by
\[(\mathcal F^{-1}f)_{n+lq}=\int e^{il\Theta} f_n(\Theta) \frac{d\Theta}{2\pi},\]
for $l\in \Z$ and $n=0,\ldots, q-1$.

{}
The function $\psi: z\to [0,\pi]$, defined in (11.2.19) of \cite{Simon} plays the role of $kp$ in \cite{Damanik-Gan}. That is, given any $z\in\Sigma(\mathcal E)$, we must have a solution $u=\phi$ to $\mathcal Eu=zu$ that satisfies $\phi_{n+q}=e^{i\psi(z)}\phi_n$. We also have an equilbrium measure $d\nu$ (denoted $d\rho$ in \cite{Damanik-Gan}) on the bands, that is written in the form $d\nu=V(\theta) d\theta$ where
\begin{equation}\label{bigV}
V(\theta)=\frac{1}{q\pi}\left\vert\frac{d\psi(e^{i\theta})}{d\theta}\right\vert.
\end{equation}
This is (11.2.25) in \cite{Simon}. 

{}
Given every $z\in\Sigma(\mathcal E)$, if we choose $\Theta=\psi(z)$ we know that $z$ is an eigenvalue of $\mathcal E_q(\Theta)$. Furthermore, from (11.2.19) in \cite{Simon} we can define $\psi(z)$ in terms of an $\arccos$ of a polynomial, and thus $\psi$ is smooth as a function on a band. In fact from (\ref{recurrence}) it is clear that there are two linearly independent solutions of $z \phi=\mathcal E\phi$. We label them $\phi^+(z),\phi^-(z)$ and assume without loss of generality that
\[\sum_{j=0}^{q-1} \vert \phi_j^\pm\vert^2=1.\]
 
For a vector $u$ of finite support, we define
\[V u^\pm(z)=\frac{q}{2}\sum_{n\in \Z}\overline{\phi_n^\pm(z)}u_n.\]

\begin{lemma}\label{unitary}
The $U=V\mathcal F$ operator extends to a unitary map of $\ell^2(\Z)$ to \\ $L^2(\Sigma(\mathcal E),d\nu(\theta);\mathbb C^2)$.
\end{lemma}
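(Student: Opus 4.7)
The plan is to recognize $U$ as the Bloch--Floquet diagonalization of the periodic extended CMV matrix $\mathcal{E}$, paralleling the OPRL template in Section~5.3 of \cite{Simon2}. The argument has three logical ingredients: unitarity of the Fourier component $\mathcal{F}$, fiberwise diagonalization of $\mathcal{E}$ by the Bloch solutions $\phi^{\pm}$, and a change of variables from the Floquet parameter $\Theta$ to the spectral parameter $\theta$. The first ingredient is essentially free: $\mathcal{F}:\ell^2(\Z)\to L^2(\partial\D,\tfrac{d\Theta}{2\pi};\C^q)$ is unitary by the Parseval identity for $\{e^{il\Theta}\}$ already displayed in the text, with the inversion formula written there explicitly. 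Since $\mathcal{E}$ commutes with the shift $M$ by $q$, conjugation by $\mathcal{F}$ decomposes it into the direct integral $\int^{\oplus}\mathcal{E}_q(\Theta)\,\tfrac{d\Theta}{2\pi}$ of the $q\times q$ fiber matrices.

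The second ingredient is understanding how the Floquet parameter covers the spectrum. Fix an interior band $B$ of $\Sigma(\mathcal{E})$. By Proposition~\ref{Floquet}(c) we have $z\in\Sigma(\mathcal{E}_q(\Theta))$ if and only if $\Delta(z)=2\cos\Theta$, and since $\Delta$ is strictly monotone on $B$ with range $[-2,2]$, the map $\Theta\mapsto z$ is a $2\!:\!1$ cover of $B$ as $\Theta$ runs over $[0,2\pi)$. The two preimages of an interior $z\in B$ are $\psi(z)$ and $2\pi-\psi(z)$, and they produce the two linearly independent Bloch solutions $\phi^{\pm}(z)$ of $\mathcal{E}u=zu$ satisfying $\phi_{n+q}^{\pm}(z)=e^{\pm i\psi(z)}\phi_n^{\pm}(z)$. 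Restricted to one period, $\phi^{+}$ and $\phi^{-}$ are the unit eigenvectors of $\mathcal{E}_q(\pm\psi(z))$ respectively; collecting one such pair for each of the $q$ bands produces the entire eigenbasis of $\mathcal{E}_q(\Theta)$ for almost every $\Theta$. The null set $\Theta\in\{0,\pi\}$ of band edges can be discarded throughout.

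The isometry calculation is then the following. For finitely supported $u$, Parseval in $\C^q$ applied to the orthonormal eigenbasis of $\mathcal{E}_q(\Theta)$ yields
\[\|u\|_{\ell^2}^2=\int_0^{2\pi}\sum_{j=1}^{q}\bigl|\langle (\mathcal{F}u)(\Theta),v_j(\Theta)\rangle\bigr|^2\,\frac{d\Theta}{2\pi}.\]
Each inner product on the right is, up to a prefactor, $Vu^{\pm}(z)$ at the band point $z$ determined by $(j,\Theta)$, because the Bloch solution evaluated on one period coincides with the fiber eigenvector while the sum over $n\in\Z$ in the definition of $V$ reproduces the action of $\mathcal{F}$. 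Changing variables from $\Theta$ to $\theta$ on each band via the Jacobian $V(\theta)=\tfrac{1}{q\pi}|d\psi/d\theta|$ of (\ref{bigV}), and combining the two $\Theta$-preimages of each $z$ into the $\C^2$-valued fiber over $z$, rewrites the right-hand side as $\int_{\Sigma(\mathcal{E})}\bigl(|Vu^{+}(z)|^2+|Vu^{-}(z)|^2\bigr)\,d\nu(\theta)$. Thus $U$ is isometric on a dense subspace and extends to an isometry on all of $\ell^2(\Z)$. Surjectivity follows by writing the candidate adjoint $(U^{*}g)_n=\sum_{\pm}\int_{\Sigma(\mathcal{E})}g^{\pm}(z)\phi_n^{\pm}(z)\,d\nu(\theta)$ and running the analogous calculation, which uses completeness of the Bloch basis in each fiber.

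The hard part will be keeping track of the normalizing constants: one must verify that the $q/2$ in the definition of $V$, the $\tfrac{1}{q\pi}$ inside (\ref{bigV}), the $2\!:\!1$ multiplicity of the cover $\Theta\mapsto z$, and the convention $\sum_{j=0}^{q-1}|\phi_j^{\pm}(z)|^2=1$ conspire so that no stray factor survives. Once the constants are pinned down, the structural part of the proof is the standard Bloch-wave expansion, adapted verbatim from the OPRL setting of \cite{Simon2}.
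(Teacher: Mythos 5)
Your proposal is correct and follows essentially the same route as the paper: the paper's proof likewise rests on the restriction of the Bloch solutions to one period forming an orthonormal eigenbasis of each fiber $\mathcal E_q(\Theta)$ (so that $V$ is a fiberwise unitary change of basis), the change of variables $d\nu=\frac{2}{q}\frac{d\Theta}{2\pi}$ from (\ref{bigV}), and the unitarity of $\mathcal F$. You simply spell out the direct-integral decomposition, the $2\!:\!1$ cover $\Theta\mapsto z$, and the constant bookkeeping that the paper's terser argument leaves implicit.
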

\begin{proof}
Note that $\tilde\phi^+\equiv\{\phi_n^+\}_{n=0}^{l-1}$ is a normalized eigenvector of $\mathcal E_q(\Theta)$. Thus if $\lambda_1,\ldots, \lambda_q$ are the eigenvalues for a certain $\mathcal E_q(\Theta)$, $\{\tilde \phi^+(\lambda_j)\}_{j=1}^q$ is an orthonormal basis for $\mathbb C^q$. Hence $V$ is just a unitary change of basis: this is made clear when we compute
\begin{align*}
d\nu=& V(\theta) d\theta,\\
=& \frac{1}{q\pi}\left\vert\frac{d\psi(e^{i\theta})}{d\theta}\right\vert d\theta,\\
=& \frac{1}{q\pi}\left\vert\frac{d\Theta}{d\theta}\right\vert d\theta,\\
=& \frac{2}{q} \frac{d\Theta}{2\pi}.
\end{align*}

Furthermore, since $\mathcal F$ is unitary as well $U$ must be also.
\end{proof}
We clearly also have
\[[U(\mathcal E u)]^{\pm}(z)=M_z[Uu]^{\pm}(z),\]
where $M_z$ is the multiplication by $z$ operator in $L^2(\Sigma(\mathcal E),d\nu(\theta);\mathbb C^2)$.

\begin{lemma}\label{gfunction}
Given the $q$-periodic sequence $\alpha$ and a vector $u$ of finite support, we can write the density function 
\begin{equation}\label{density}
g_{\alpha,u}=(\vert (U u)^+(e^{i\theta})\vert^2+\vert (Uu)^-(e^{i\theta})\vert^2)\left\vert\frac{d\nu({\theta})}{d\theta}\right\vert,
\end{equation}
so that the spectral measure associated with the sequence $\alpha$ and the vector $u$ is
\[d\mu_{\alpha,u}=g_{\alpha,u}(e^{i\theta}) d\theta.\]
\end{lemma}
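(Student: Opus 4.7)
The plan is to apply the spectral theorem via the unitary conjugation already established in Lemma~\ref{unitary}. By definition, the spectral measure $d\mu_{\alpha,u}$ associated to the vector $u$ is characterized by
\[
\langle u, F(\mathcal E) u\rangle_{\ell^2(\Z)} = \int F(z)\, d\mu_{\alpha,u}(z)
\]
for every continuous function $F$ on $\partial\D$, so the task reduces to computing the left-hand side in a convenient representation.

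First I would invoke Lemma~\ref{unitary} to transport the problem to $L^2(\Sigma(\mathcal E), d\nu; \mathbb C^2)$. Since $U$ is unitary and intertwines $\mathcal E$ with multiplication by $z$, i.e., $U\mathcal E = M_z U$, the functional calculus carries over, yielding
\[
\langle u, F(\mathcal E) u\rangle_{\ell^2(\Z)} = \langle Uu, F(M_z) Uu\rangle_{L^2(\Sigma(\mathcal E),d\nu;\mathbb C^2)}.
\]
On the right-hand side, $F(M_z)$ acts as scalar multiplication by $F(z)$ on each component of $Uu = ((Uu)^+, (Uu)^-)$, so the pairing unfolds to
\[
\int_{\Sigma(\mathcal E)} F(z)\bigl(|(Uu)^+(z)|^2 + |(Uu)^-(z)|^2\bigr)\, d\nu(\theta).
\]

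Comparing the two displays and using $d\nu = \left|\frac{d\nu(\theta)}{d\theta}\right| d\theta$ (whose density is precisely the quantity $V(\theta)$ from (\ref{bigV})), I arrive at
\[
d\mu_{\alpha,u}(e^{i\theta}) = \bigl(|(Uu)^+(e^{i\theta})|^2 + |(Uu)^-(e^{i\theta})|^2\bigr)\left|\frac{d\nu(\theta)}{d\theta}\right|\, d\theta,
\]
which is exactly $g_{\alpha,u}(e^{i\theta})\, d\theta$ as in (\ref{density}). I anticipate essentially no obstacle here: all of the substantive analytic content, namely unitarity of $U$, the change of variables linking $d\Theta$ and $d\nu$, and the intertwining property, has been absorbed into Lemma~\ref{unitary}. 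What remains is simply a clean bookkeeping exercise in the spectral theorem, reading off the density of the scalar spectral measure directly from the image of $u$ under the unitary equivalence.
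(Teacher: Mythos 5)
Your proposal is correct and follows essentially the same route as the paper: compute $\langle u, F(\mathcal E)u\rangle$ in the fiber representation via the unitary $U$ from Lemma~\ref{unitary} and the intertwining $U\mathcal E = M_z U$, equate it with $\int F\, d\mu_{\alpha,u}$, and read off the density using $d\nu = V(\theta)\, d\theta$. The only cosmetic difference is that the paper tests against polynomials and then extends, while you test against continuous functions directly; both are standard and equivalent here.
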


\begin{proof}
Let $u$ be a finitely supported vector, and let $d\mu_u$ be the corresponding spectral measure of $\mathcal E$. 
Let $M_z$ be the multiplication by $z$ operator on $L^2(\Sigma(\mathcal E),d\mu_u)$. 
If $f$ is a polynomial, we have, clearly
\begin{align*}
\left<Uu,M_{f(z)}Uu\right>_{L^2(d\nu)}=&\left< Uu,Uf(\mathcal E)u\right>_{L^2(d\nu)},\\
=&\left< u,f(\mathcal E)u\right>,\\
=&\int_{\Sigma(\mathcal E)} f(e^{i\theta}) d\mu_u(\theta).
\end{align*}
This then clearly holds if we let $f$ be any $L^2$ function.
But we also know

\[\left< Uu, M_{f(z)}Uu\right>_{L^2(d\nu)}= \int_{\Sigma(\mathcal E)} f(e^{i\theta}) \norm Uu(e^{i\theta})\norm^2 d\nu(\theta).\]
\end{proof}
{}
With these correspondences, the proofs and statements of Lemmas 3.1-3.3 of \cite{Damanik-Gan} follow verbatim:

\begin{lemma}\label{3.1}
For every $t \in (1,2)$, there exists a constant $D =
D(\|\alpha\|_\infty,q,t)$ such that
\begin{equation}\label{rho}
\int_{\Sigma(\mathcal E)} \left| \frac{1}{q\pi}\frac{d\psi}{d\theta}(\theta) \right|^t \, d\theta \le D.
\end{equation}
\end{lemma}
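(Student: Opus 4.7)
The plan is to reduce the integrand to a function of the discriminant $\Delta$ by using the identity $\Delta(e^{i\theta}) = 2\cos\psi(e^{i\theta})$ on each band of $\Sigma(\mathcal E)$. This identity follows from Proposition \ref{Floquet}(c): a point $z \in \Sigma(\mathcal E)$ is an eigenvalue of $\mathcal E_q(\Theta)$ precisely when $\Delta(z) = e^{i\Theta}+e^{-i\Theta} = 2\cos\Theta$, and by the definition of $\psi$ the relevant parameter $\Theta$ is $\psi(z) \in [0,\pi]$. Differentiating in $\theta$ and using $\sin\psi = \sqrt{1 - \Delta^2/4}$ on the interior of each band gives
\[
\left|\frac{d\psi}{d\theta}\right| \;=\; \frac{|d\Delta/d\theta|}{\sqrt{4 - \Delta(e^{i\theta})^2}}.
\]

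Next I would bound $|d\Delta/d\theta|$ uniformly on $\partial\D$. By (11.1.2) of \cite{Simon}, $z^{q/2}\Delta(z)$ is a Laurent polynomial in $z$ of degree at most $q$ whose coefficients are polynomial expressions in the $\alpha_j$ and $\rho_j = \sqrt{1-|\alpha_j|^2}$. Since $\|\alpha\|_\infty < 1$ keeps each $\rho_j$ bounded away from $0$, those coefficients are controlled by some constant $C_1 = C_1(\|\alpha\|_\infty,q)$, and differentiating a trigonometric polynomial of degree at most $q/2$ in $\theta$ multiplies its supremum norm by at most $q/2$, yielding $|d\Delta/d\theta| \le C_2 = C_2(\|\alpha\|_\infty,q)$ uniformly in $\theta$.

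I would then split $\Sigma(\mathcal E)$ into its $q$ bands $B_1,\ldots,B_q$. On each $B_j$, Proposition \ref{Floquet}(c) gives a monotone bijection $\Delta\colon B_j\to [-2,2]$, so the change of variables $u = \Delta(e^{i\theta})$ produces
\[
\int_{B_j}\frac{|d\Delta/d\theta|^t}{(4-\Delta^2)^{t/2}}\,d\theta \;=\; \int_{-2}^{2}\frac{|d\Delta/d\theta|^{t-1}}{(4-u^2)^{t/2}}\,du \;\le\; C_2^{\,t-1}\int_{-2}^{2}\frac{du}{(4-u^2)^{t/2}}.
\]
Near $u=\pm2$ the integrand behaves like $(2\mp u)^{-t/2}$, which is integrable exactly because $t < 2$. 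Summing the $q$ band contributions and including the $(q\pi)^{-t}$ prefactor yields the desired finite bound $D(\|\alpha\|_\infty,q,t)$.

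The main obstacle I expect is obtaining the uniform bound on $|d\Delta/d\theta|$ with the correct dependence on $\|\alpha\|_\infty$ and $q$ alone. Since the transfer-matrix representation of $\Delta$ can carry factors of $\rho_j^{-1}$ at intermediate stages, the argument genuinely uses $\|\alpha\|_\infty < 1$ (not merely $\le 1$) to keep those factors finite; this is the only place where the openness of $\D$ enters essentially. Once that bound is in hand, the endpoint integrability at $u = \pm 2$ is the familiar feature of the Jacobi-type density $(4-u^2)^{-t/2}$, and the rest is routine bookkeeping over the $q$ bands.
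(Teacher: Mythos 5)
Your proposal is correct and follows essentially the same route as the paper: both rest on the identity $|d\psi/d\theta|=|\Delta'(e^{i\theta})|/(2\sin\psi)$ (equivalently your $|d\Delta/d\theta|/\sqrt{4-\Delta^2}$), a uniform $(\|\alpha\|_\infty,q)$-dependent bound on $|\Delta'|$, and a change of variables that leaves an integrable band-edge singularity precisely because $t<2$. The only cosmetic difference is that the paper integrates in the variable $\psi\in[0,\pi]$, bounding $\int_0^\pi\sin(\psi)^{1-t}\,d\psi$, whereas you integrate in $u=\Delta\in[-2,2]$, bounding $\int_{-2}^2(4-u^2)^{-t/2}\,du$; these are the same integral under $u=2\cos\psi$.
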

\begin{proof}
By (11.2.23) of \cite{Simon} we have
$$
\left|  \frac{1}{q\pi}\frac{d\psi}{d\theta}(\theta) \right| = \left| \frac{\Delta' (e^{i\theta})}{2q\pi \,
\sin(\psi)} \right|.
$$
Since we can bound $|\Delta' (e^{i\theta})|$ by a
$(\|\alpha\|_\infty,q)$-dependent constant and \\
$\int_0^{\pi}(\sin(x))^{1-t} \, dx < \infty$, we have the following estimates, where $C_1,C_2$ are constants.
$$
\int_{\Sigma(\mathcal E)} \left|  \frac{1}{q\pi}\frac{d\psi}{d\theta}(\theta) \right|^t \, d\theta \leq C_1
\int_0^{\pi} \left| \frac{1}{2q \pi\, \sin(\psi)}
\right|^{t-1} \, d\psi \leq C_2 \int_0^\pi
|\sin(\psi)|^{1-t} \, d\psi,
$$
and the last integral may be bounded by a $t$-dependent constant.
\end{proof}

\begin{lemma}\label{l.one}
Let $u \in \ell^2(\mathbb{Z})$ have finite support. Then, for
every $t \in (1,2)$, there exists a constant $Q =
Q(\|\alpha\|_\infty,q,u,t)$ such that
\begin{equation}\label{Gexpre}
\int_{\Sigma(\mathcal E)} \left| g_{\alpha,u}(e^{i\theta}) \right|^t \, d\theta \le Q.
\end{equation}
\end{lemma}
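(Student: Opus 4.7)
The strategy is to factor $g_{\alpha,u}(e^{i\theta})$ into the ``spectral weight'' $V(\theta) = \bigl|d\nu/d\theta\bigr| = \tfrac{1}{q\pi}\bigl|d\psi/d\theta\bigr|$ and the ``amplitude'' $A(\theta) = |(Uu)^+(e^{i\theta})|^2 + |(Uu)^-(e^{i\theta})|^2$, and then to bound $A(\theta)$ pointwise by a constant that depends only on $\|\alpha\|_\infty$, $q$, and $u$. Once this is done, Lemma \ref{3.1} immediately handles the remaining $V$-factor, because
\[ \int_{\Sigma(\mathcal E)} |g_{\alpha,u}(e^{i\theta})|^t \, d\theta \;\le\; \bigl(\sup_\theta A(\theta)\bigr)^t \int_{\Sigma(\mathcal E)} V(\theta)^t \, d\theta \;\le\; \bigl(\sup_\theta A(\theta)\bigr)^t \cdot D, \]
with $D = D(\|\alpha\|_\infty,q,t)$ from Lemma \ref{3.1}.

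The pointwise amplitude bound is where all the real work lies, and it is short. The Floquet solutions $\phi^\pm$ are normalized so that $\sum_{j=0}^{q-1}|\phi_j^\pm(z)|^2 = 1$, which immediately forces $|\phi_j^\pm(z)| \le 1$ for every $j \in \{0,\ldots,q-1\}$. The quasi-periodicity $\phi_{n+q}^\pm(z) = e^{i\psi(z)}\phi_n^\pm(z)$ then promotes this to the uniform bound $|\phi_n^\pm(z)| \le 1$ for every $n \in \Z$ and every $z \in \Sigma(\mathcal E)$ at which the two solutions are linearly independent. Since $u$ has finite support,
\[ |(Uu)^\pm(e^{i\theta})| \;=\; \left| \frac{q}{2} \sum_{n \in \Z} \overline{\phi_n^\pm(e^{i\theta})}\, u_n \right| \;\le\; \frac{q}{2} \|u\|_1, \]
so $A(\theta) \le \tfrac{q^2}{2}\|u\|_1^2$ almost everywhere. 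Substituting this into the display above produces the claimed integrability with constant $Q = \bigl(\tfrac{q^2\|u\|_1^2}{2}\bigr)^t D$, which depends only on $\|\alpha\|_\infty$, $q$, $u$, and $t$.

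I do not expect a serious obstacle here. The one mild subtlety is at band edges, where $\psi(z) \in \{0,\pi\}$ and the two Floquet solutions may fail to be linearly independent, so that the normalization of $\phi^\pm$ needs slightly different handling. These edge points form a finite set, hence have Lebesgue measure zero, and can be excluded from the $L^t$ integral without affecting the final estimate. The overall approach is the OPUC analogue of the proof of Lemma 3.2 in \cite{Damanik-Gan}, and once the correspondences set up earlier in this section (the Floquet transform $U$, the eigenvector normalization, and the formula \eqref{bigV} for $V$) are in place, the proof proceeds by the same pattern.
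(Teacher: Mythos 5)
Your proof is correct and takes essentially the same approach as the paper: factor $g_{\alpha,u}$ into the amplitude $|(Uu)^+(e^{i\theta})|^2+|(Uu)^-(e^{i\theta})|^2$, bound that factor pointwise by a constant depending only on $q$ and $u$, and then apply Lemma \ref{3.1} to $\int_{\Sigma(\mathcal E)}\left|\frac{1}{q\pi}\frac{d\psi}{d\theta}\right|^t d\theta$. The only difference is that you make explicit (via the normalization of $\phi^\pm$ over a period and the unimodular Floquet multiplier) the pointwise bound $\|[Uu](e^{i\theta})\|^2\le M(q,u)$, which the paper simply asserts.
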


\begin{proof}
Since $u$ has a finite support, we can find a constant $M =
M(q,u)$ such that $\norm[Uu](e^{i\theta})\norm^2 \leq M$. Thus, by (\ref{density})
we have
\begin{align*}
\int_{\Sigma(\mathcal E)} \left| g_{\alpha,u} (e^{i\theta}) \right|^t \, d\theta & =
\int_{\Sigma(\mathcal E)} \left[  \left( |U{u}^+(e^{i\theta})|^2 +
|U{u}^-(e^{i\theta})|^2 \right) \left| \frac{1}{q\pi}\frac{d\psi}{d\theta}(\theta) \right| \right]^t \, d\theta, \\
& \le M^t \int_{\Sigma(\mathcal E)} \left|
\frac{1}{q\pi}\frac{d\psi}{d\theta}(\theta) \right|^t
\, d\theta, \\
& \le  M^t D.
\end{align*}
with the constant $D$ from the previous lemma.
\end{proof}

\begin{lemma}\label{l.two}
Let $(X,d\mu)$ be a finite measure space, let $r>1$ and let $f_n,f
\in L^r$ with $\sup_n\| f_n \|_r < \infty$. Suppose that $f_n(x)
\rightarrow f(x)$ pointwise almost everywhere. Then, $\| f_n-f
\|_p \rightarrow 0$ for every $p < r$.
\end{lemma}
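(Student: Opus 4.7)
The plan is to prove this via a standard uniform-integrability argument, splitting the space using Egorov's theorem and controlling the small-measure remainder by H\"older's inequality, which leverages the uniform $L^r$ bound.

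First I would establish that $f\in L^r$ with $\|f\|_r \le K := \sup_n \|f_n\|_r$: this follows from Fatou's lemma applied to $|f_n|^r \to |f|^r$ a.e. Consequently $\|f_n - f\|_r \le 2K$ for all $n$. This bound is what will let us absorb the ``bad set'' portion of the integral.

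Next, fix $p \in [1, r)$ and $\epsilon > 0$. Because $\mu(X) < \infty$ and $f_n \to f$ almost everywhere, Egorov's theorem supplies a measurable set $E \subset X$ with $\mu(X \setminus E) < \delta$ (for a $\delta > 0$ to be chosen) on which $f_n \to f$ uniformly. I would then split
\[
\int_X |f_n - f|^p \, d\mu \;=\; \int_E |f_n - f|^p \, d\mu \;+\; \int_{X \setminus E} |f_n - f|^p \, d\mu.
\]
On $E$, uniform convergence plus $\mu(E) \le \mu(X) < \infty$ gives that the first integral is bounded by $\mu(X) \sup_{x \in E} |f_n(x) - f(x)|^p$, which tends to $0$ as $n \to \infty$. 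On the complement, H\"older's inequality with exponents $r/p$ and $r/(r-p)$ yields
\[
\int_{X \setminus E} |f_n - f|^p \, d\mu \;\le\; \left( \int_X |f_n - f|^r \, d\mu \right)^{p/r} \mu(X \setminus E)^{1 - p/r} \;\le\; (2K)^p \, \delta^{1 - p/r}.
\]
Since $1 - p/r > 0$, we can choose $\delta$ a priori so that $(2K)^p \delta^{1-p/r} < \epsilon/2$, and then take $n$ large enough that the $E$-integral is below $\epsilon/2$. This gives $\|f_n - f\|_p^p \to 0$, hence $\|f_n - f\|_p \to 0$.

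I do not expect any serious obstacle: the only subtlety is that the $L^r$ bound is used only indirectly, through H\"older on the small-measure set, while the actual convergence is extracted from Egorov on the bulk. The order of quantifiers matters --- $\delta$ must be chosen before $n$ --- but this is the standard Vitali-type argument and everything is quantitative.
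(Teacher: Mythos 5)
Your argument is correct. The paper itself does not prove this lemma at all: it simply cites Lemma~2.6 of Avron--Simon, so your self-contained Egorov-plus-H\"older proof is a genuinely different (and more explicit) route to the same statement. The ingredients are all in order: Fatou gives $f\in L^r$ with $\|f\|_r\le K$, hence $\|f_n-f\|_r\le 2K$; Egorov (legitimate since $\mu(X)<\infty$ and the $f_n$ are measurable and a.e.\ finite) handles the bulk; and H\"older with exponents $r/p$ and $r/(r-p)$ controls the exceptional set uniformly in $n$, with the quantifier order ($\delta$ before $n$) handled correctly. This is essentially the standard Vitali/uniform-integrability argument that underlies the cited lemma, so nothing is lost relative to the reference; what you gain is a quantitative, self-contained proof, and what the citation buys the paper is brevity. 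One trivial remark: you restrict to $p\in[1,r)$, while the statement says $p<r$; for $0<p<1$ the same splitting works (the ``norm'' is then only a quasi-norm), and in any case the paper only uses the lemma with $p=t\in(1,2)$, so the restriction is harmless.
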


\begin{proof}
This is \cite[Lemma~2.6]{Avron-Simon}.
\end{proof}

\begin{lemma}\label{l.three}
Suppose $u \in \ell^2(\mathbb{Z})$ has finite support and $\alpha^{(n)},\alpha :
\Z \to \partial\D$ are $q$-periodic and such that $\| \alpha^{(n)} - \alpha \|_\infty
\rightarrow 0$ as $n \rightarrow \infty$. Then, for any $t \in
(1,2)$, we have
$$
\int_{\partial\D} \left|g_{\alpha^{(n)},u}(e^{i\theta}) - g_{\alpha,u}(e^{i\theta}) \right|^t \, d\theta
\rightarrow 0,
$$
as $n \to \infty$.
\end{lemma}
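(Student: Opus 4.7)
The plan is to derive Lemma \ref{l.three} by combining the uniform $L^r$ bound of Lemma \ref{l.one} with the Vitali-type Lemma \ref{l.two}, once we establish pointwise almost-everywhere convergence of the densities. Fix some $r \in (t,2)$. Since $\alpha^{(n)} \to \alpha$ uniformly, the norms $\|\alpha^{(n)}\|_\infty$ are bounded by a common constant, and inspection of the proof of Lemma \ref{l.one} shows that the resulting constant $Q(\|\alpha^{(n)}\|_\infty, q, u, r)$ depends on the Verblunsky coefficients only through polynomial bounds on $|\Delta'|$ and through $\|Uu\|_\infty$ (which for finitely-supported $u$ is bounded uniformly in $\alpha$ by the normalization $\sum_{j=0}^{q-1}|\phi_j^{\pm}|^2 = 1$). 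Hence $\sup_n \|g_{\alpha^{(n)},u}\|_{L^r(\partial\D, d\theta)} < \infty$.

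Next I would verify that $g_{\alpha^{(n)},u}(e^{i\theta}) \to g_{\alpha,u}(e^{i\theta})$ pointwise almost everywhere in $\theta$. The discriminant $\Delta_\alpha(z)$ is a polynomial in $z$ whose coefficients depend continuously on $\alpha_0,\ldots,\alpha_{q-1}$, so $\Delta_{\alpha^{(n)}} \to \Delta_\alpha$ uniformly on $\partial\D$. At $\theta$ with $|\Delta_\alpha(e^{i\theta})| > 2$, the point lies outside $\Sigma(\mathcal E_{\alpha^{(n)}})$ for large $n$, so both $g_{\alpha^{(n)},u}(e^{i\theta})$ and $g_{\alpha,u}(e^{i\theta})$ vanish. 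At $\theta$ in the interior of a band of $\Sigma(\mathcal E_\alpha)$, the function $\psi(e^{i\theta}) = \arccos(\Delta_\alpha(e^{i\theta})/2)$ and its derivative are smooth in $\alpha$, and the corresponding eigenvalue $z$ of $\mathcal E_q(\psi(z))$ is simple, so the normalized eigenvectors $\phi^{\pm}$ may be chosen to depend continuously on $\alpha$. Since $u$ has finite support, the expression $Vu^{\pm}(z) = \tfrac{q}{2}\sum_n \overline{\phi^{\pm}_n(z)}\, u_n$ is a finite sum of continuous functions of $\alpha$, and formula (\ref{density}) then gives the desired pointwise convergence. The exceptional set $\{\theta : |\Delta_\alpha(e^{i\theta})| = 2\}$ consists of the finitely many band-edges of $\Sigma(\mathcal E_\alpha)$ and is Lebesgue null.

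With the uniform $L^r$ bound and pointwise a.e.\ convergence established, Lemma \ref{l.two} applied to the finite measure space $(\partial\D, d\theta)$ with exponents $r$ and $t < r$ immediately yields $\|g_{\alpha^{(n)},u} - g_{\alpha,u}\|_{L^t} \to 0$, which is exactly the claim. The main technical hurdle is the pointwise convergence step at band-interior points, since one must check that the eigenspace decomposition of $\mathcal E_q(\Theta)$ varies continuously under small perturbations of $\alpha$, so that the normalized $\phi^{\pm}$ can be tracked continuously. This is standard perturbation theory because the band-interior eigenvalues of $\mathcal E_q(\Theta)$ are simple, but it is the one place in the argument requiring some care; happily, only $|Uu^{\pm}|^2$ enters (\ref{density}), so the inherent phase ambiguity in $\phi^{\pm}$ is harmless.
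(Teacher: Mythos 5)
Your proposal is correct and follows essentially the same route as the paper: reduce to pointwise a.e.\ convergence via the uniform $L^r$ bound of Lemma \ref{l.one} and the convergence Lemma \ref{l.two}, then obtain pointwise convergence from the continuous dependence of the discriminant and of the (simple) eigenvectors of $\mathcal E_q(\Theta)$ on the Verblunsky coefficients. You simply spell out details the paper leaves implicit (choice of $r\in(t,2)$, negligibility of the band edges, harmlessness of the phase ambiguity), which is fine.
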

\begin{proof}
By Lemmas~\ref{l.one} and \ref{l.two} we only need to prove
pointwise convergence. Given the explicit identity \eqref{density},
pointwise convergence follows readily from the following two
facts: the discriminant of the approximants converges pointwise to
the discriminant of the limit and the matrices $\mathcal E({e^{i\psi}})$ defined in (11.2.4) and Figure 11.3 of \cite{Simon}  associated
with the approximants converge pointwise to those associated with
the limit and therefore so do the associated eigenvectors.
\end{proof}

\begin{proof}[Proof of Theorem \ref{t.ac}]
The idea is to modify the construction from the proof of
Theorem~\ref{t.cantor}. Thus, we will again start with an
arbitrarily small ball in $C(\Omega,\partial\D)$ and construct a point in
this ball for which the associated CMV operator has both
Cantor spectrum and purely absolutely continuous spectrum. The
presence of absolutely continuous spectrum then also implies that
the Lebesgue measure of the spectrum is positive.

Fix $t\in (1,2)$ and let $u \in \ell^2(\mathbb{Z})$ have finite
support. In going through the construction in the proof of
Theorem~\ref{t.cantor}, pick $s_k$ so that in addition to the
conditions above, we have
\begin{equation}\label{Gfinal}
\left(\int_{\partial\D} \left|g_u^{k-1}(e^{i\theta}) -
g_u^{k}(e^{i\theta})\right|^t d\theta\right )^{\frac{1}{t}} \leq \frac{1}{2^k},
\end{equation}
where $g_u^{k}$ is the density of the spectral measure associated
with $u$ and the periodic Verblunsky coefficient $n \mapsto f_k(T^n \omega)$,
with the estimate above being uniform in $\omega \in \Omega$. This
is possible due to Lemma~\ref{l.three}.

By Lemma~\ref{l.one}, there exists a constant $Q(u,t) < \infty$
such that $\int_\R \left|g_u^k(e^{i\theta})\right|^t \, d\theta \leq Q(u,t)$.

Now fix any $\omega \in \Omega$. Let $A$ be a finite union of open
sets in $\partial \D$. If $P_A^k$ is the spectral projection for the Verblunsky coefficients  $n
\mapsto f_k(T^n \omega)$ and $P_A$ is the spectral projection for
the Verblunsky coefficients $n \mapsto \tilde f(T^n \omega)$, it follows that
$\langle u,P_A u \rangle \le \limsup_{k\rightarrow \infty} \langle
u, P_A^{k} u \rangle$ since $\|f_k - \tilde{f}\|_\infty \to 0$
and hence the associated CMV operators converge in norm.

Applying H\"older's inequality, we find
$$
\langle u, P_A u \rangle \le \limsup_{k \to \infty} \int_A
g_u^k(e^{i\theta}) \, d\theta \le Q(u,t) |A|^{\frac{1}{q}},
$$
where $\frac{1}{q}+\frac{1}{t}=1$ and $| \cdot |$ denotes Lebesgue
measure. This shows that the spectral measure associated with $u$
and the CMV operator with potential $n \mapsto \tilde
f(T^n \omega)$ is absolutely continuous with respect to Lebesgue
measure. Since this holds for every finitely supported $u$, it
follows that this operator has purely absolutely continuous
spectrum.
\end{proof}

\end{section}

\bibliographystyle{unsrt}   
\bibliography{mybib}
\end{document}